\theoremstyle{plain}
\newtheorem{theorem}{Theorem}[section]
\newtheorem{proposition}[theorem]{Proposition}
\newtheorem{corollary}[theorem]{Corollary}
\newtheorem{lemma}[theorem]{Lemma}
\theoremstyle{definition}
\newtheorem{example}[theorem]{Example}
\newtheorem{remark}[theorem]{Remark}
\newtheorem{question}[theorem]{Question}
\newtheorem{problem}[theorem]{Problem}
\theoremstyle{remark}
\numberwithin{equation}{section}
\newcommand{\N}{\mathbb N}
\newcommand{\Z}{\mathbb Z}
\newcommand{\R}{\mathbb R}
\newcommand{\C}{\mathbb C}
\newcommand{\fg}{\mathfrak g}
\newcommand{\fh}{\mathfrak h}
\newcommand{\fk}{\mathfrak k}
\newcommand{\fp}{\mathfrak p}
\newcommand{\fq}{\mathfrak q}
\DeclareMathOperator{\Ot}{O}
\DeclareMathOperator{\SO}{SO}
\DeclareMathOperator{\SU}{SU}
\DeclareMathOperator{\Ut}{U}
\newcommand{\ut}{\mathfrak{u}}
\newcommand{\op}{\operatorname}
\newcommand{\Id}{\textup{Id}}
\newcommand{\mi}{\mathrm{i}}
\DeclareMathOperator{\tr}{Tr}
\DeclareMathOperator{\diag}{diag}
\DeclareMathOperator{\Span}{Span}
\newcommand{\inner}[2]{\langle {#1},{#2}\rangle }
\newcommand{\innerdots}{\langle {\cdot},{\cdot}\rangle }
\DeclareMathOperator{\Ad}{Ad}
\DeclareMathOperator{\Spec}{Spec}
\DeclareMathOperator{\mult}{mult}
\DeclareMathOperator{\vol}{vol}
\newcommand{\HH}{\mathcal{H}}
\newcommand{\PP}{\mathcal{P}}
\newcommand{\GG}{\mathcal{G}}
\renewcommand{\tt}{\theta}
\title{Isospectral CR manifolds with respect to the Kohn Laplacian}
\author{Gerson Gutierrez}
\address{FAMAF--CIEM, Universidad Nacional de C\'ordoba, M.\ Allende s/n, Ciudad Universitaria, 5000 C\'ordoba, Argentina.}
\email{gerson.gutierrez@unc.edu.ar}
\author{Emilio~A.~Lauret}
\address{Instituto de Matem\'atica (INMABB), Departamento de Matem\'atica, Universidad Nacional del Sur (UNS)-CONICET, Bah\'ia Blanca, Argentina.}
\email{emilio.lauret@uns.edu.ar}
\author{Juan Pablo Rossetti}
\address{FAMAF--CIEM, Universidad Nacional de C\'ordoba, M.\ Allende s/n, Ciudad Universitaria, 5000 C\'ordoba, Argentina.}
\email{jprossetti@unc.edu.ar}
\subjclass[2020]{58J53, 32V05, 32V20.}
\keywords{lens space, spherical space forms, CR structure, Kohn Laplacian, isospectrality}
\thanks{This research was supported by grants from FONCyT (PICT-2018-02073, PICT-2019-01054), SeCyT-UNC and SGCYT–UNS}
\date{\today}
\begin{document}
\begin{abstract}
We prove that the spectrum of the Kohn Laplacian does not determine the equivalence classes of CR manifolds. 
We construct pairs of odd-dimensional elliptic manifolds that are not equivalent as CR manifolds but whose Kohn Laplacians have the same spectrum. These manifolds are endowed with the CR structures inherited from the canonical CR structure on the sphere of the same dimension.
We provide three different constructions among lens spaces and an additional one among elliptic manifolds with non-cyclic fundamental groups. 
\end{abstract} 

\maketitle


\section{Introduction}
Inverse spectral geometry studies to what extent does the spectral information encode the geometry of an object. 
The case of compact Riemannian manifolds with respect to the spectrum of the canonically associated Laplace-Beltrami operator is the most considered case (see e.g.\ \cite{Gordon00survey}), although the case of drums with respect to the spectrum of the Laplacian with Dirichlet boundary conditions (see \cite{Kac66}) popularized this kind of problems. 

One of the goals of the area is to find isospectral examples, that is, objects with the same spectra but with different shape. 
Such examples are very important to test spectral invariants, that is, properties that are determined by the spectral information. 
Indeed, a property that is not shared by two isospectral objects cannot be spectrally determined, also called \emph{inaudible}. 

In this work we consider the context of CR manifolds with respect to the spectrum of the Kohn Laplacian. 
This operator has been deeply studied, in many articles, and also its spectrum on CR manifolds.
Isospectrality with respect to this operator has been firstly explored, very recently, by Fan, Kim, Plzak, Shors, Sottile and Zeytuncu in \cite{Yunus-cia}.
They dealt with lens spaces (manifolds covered by odd-dimensional spheres with cyclic fundamental groups) endowed with the CR structure induced by the canonical CR structure on the sphere. 
They observed that the order of the fundamental group of a lens space is a spectral invariant. 
Furthermore, they proved that two CR isospectral $3$-dimensional lens spaces with fundamental groups of prime order are necessary equivalent as CR manifolds. 
In other words, two $3$-dimensional lens spaces with fundamental groups of order prime considered as CR manifolds are equivalent if and only if their corresponding Kohn Laplacians have the same spectrum.

The question whether there are examples of CR isospectral manifolds remained open. 
The main goal of this article is to provide different constructions of CR isospectral elliptic manifolds (those covered by odd-dimensional spheres with finite fundamental groups) endowed with the CR structures induced by the canonical CR structure on the sphere, which are not CR diffeomorphic. 
To do that, we extend techniques from the Riemannian case developed by Ikeda (e.g.\ \cite{IkedaYamamoto79}, \cite{Ikeda80_isosp-lens}, \cite{Ikeda80_3-dimI}, \cite{Ikeda80_3-dimII}, \cite{Ikeda83}), also from \cite{LMR-onenorm} and \cite{DD18}.
See \cite{LMR-SaoPaulo} for a recent account on the spectra of lens spaces as Riemannian manifolds with their round metrics. 

Besides introducing preliminaries on Riemannian and CR structures on elliptic manifolds, in Section~\ref{sec:preliminaries} we define the main tool of the article: a two variable generating function associated to the spectrum of the Kohn Laplacian over an elliptic CR manifold. 
A rational expression is given in Theorem~\ref{thm:F_Gamma(z,w)}. 
All isospectral constructions in the rest of the article are done by showing that their corresponding generating functions coincide. 

The first isospectral examples are given in Section~\ref{sec:ikeda}.
For each odd prime number $k$, we construct a family of lens spaces with fundamental groups of order $k$ that are mutually CR isospectral and such that the number of equivalence classes as CR manifolds in that family can be arbitrary large as long as $k$ is large enough (Theorem~\ref{thm:Gerson} and Proposition~\ref{prop:arbitraryliylarge}).

In Section~\ref{sec:computational} we establish a finiteness condition to find CR isospectral lens spaces (Theorem~\ref{thm:finitecondition}). 
This allows us to find particular examples with the help of a computer for low values of $n$ and $k$, where $k$ 
is the order of the fundamental groups and $2n-1$ is the dimension (Table~\ref{table:isosp}).

Section~\ref{sec:CRisosppairs} contains the third isospectral construction. 
Theorem~\ref{thm:theoremn} provides an enormous family of pairs of CR isospectral CR lens spaces.
This is done in a similar way as in \cite{DD18}, by DeFord and Doyle.
As a consequence, we obtain in each odd dimension infinitely many pairs of CR isospectral non equivalent CR lens spaces.  
Moreover, many of the computational examples belong to this construction. 

The three previously mentioned constructions deal with lens spaces. 
Section~\ref{sec:isospsphericalspaceforms} gives examples of CR isospectral elliptic manifolds with non-cyclic fundamental groups. 

It turned out that the generating function used in all of the above constructions also encodes the spectrum of the Laplace-Beltrami operator of an elliptic manifold endowed with certain non-round Riemannian metrics, namely, the metrics induced by Berger spheres. 
In Section~\ref{sec:Berger} we explain that every single CR isospectral pair given in the previous sections produces a continuous curve of isospectral pairs of Riemannian manifolds in the classical sense.

There are also several questions and open problems disseminated throughout the article. 

\subsection*{Acknowledgments}
The authors wish to thank professors John D'Angelo, 
 Adrián Andrada and Roberto Miatello for helpful comments.

\section{Preliminaries}\label{sec:preliminaries}

This section introduces the preliminaries necessary to work in the subsequent sections. 
It starts with basic notions of spherical space forms and the canonical CR structures admitted by their underlying differentiable manifolds. 
It continues with the important subclass of spherical space forms having cyclic fundamental groups called lens spaces. 
It ends reviewing the spectra of the Kohn Laplacian on the spheres and their quotients.

\subsection{Riemannian and CR structures on elliptic manifolds}\label{subsec:RiemCR}

The group $\Ut(n)$ embeds into $\SO(2n)$ by sending each coordinate $z=a+\mi b\in\C$ to $\left(\begin{smallmatrix} a&b\\ -b& a\end{smallmatrix}\right)\in \textup{M}_{2\times2}(\R)$. 
The group $\SO(2n)$ acts on the unit sphere $S^{2n-1}$ in $\R^{2n}$ by left multiplication; one can easily see that this action is transitive, as well as its restriction to $\Ut(n)$. 

Given $\Gamma$ a finite subgroup of $\Ot(d+1)$ acting freely on $S^{d}$ (i.e.\ $\gamma\cdot x=x$ with $\gamma \in\Gamma$ and $x\in S^{d}$ occurs only if $\gamma=I$), the corresponding manifold $S^{d}/\Gamma$ is called \emph{elliptic}.
We will restrict our attention to odd dimensional elliptic manifolds, which turn out to be orientable, and consequently, any of them is of the form $S^{2n-1}/\Gamma$ with $\Gamma\subset\SO(2n)$ acting freely on $S^{2n-1}$. 
Moreover, from their classification (see \cite{Wolf-book}), it turns out that $\Gamma$ can be assumed to be inside $\Ut(n)$. 

A \emph{spherical space form} is a complete Riemannian manifold with constant positive sectional curvature. 
Any of them is isometric to an elliptic manifold $S^{2n-1}/\Gamma$ endowed with a round metric (i.e.\ the metric induced by the canonical metric of constant sectional curvature equal to $1$ in $S^{2n-1}$). 
The next result is classic (see e.g.\ \cite[Lem.~5.1.1]{Wolf-book}).

\begin{theorem}\label{thm:isometries}
Let $\Gamma,\Gamma'$ be finite subgroups of $\Ut(n)$ acting freely on $S^{2n-1}$. 
The spherical space forms $S^{2n-1}/\Gamma$ and $S^{2n-1}/\Gamma'$ are isometric if and only if $\Gamma$ and $\Gamma'$ are conjugate in $\Ot(2n)$. 
\end{theorem}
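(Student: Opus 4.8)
The plan is to exploit the universal covering $\pi\colon S^{2n-1}\to S^{2n-1}/\Gamma$ together with the standard fact that the full isometry group of the round sphere $S^{2n-1}$ is exactly $\Ot(2n)$. Throughout I regard $S^{2n-1}/\Gamma$ and $S^{2n-1}/\Gamma'$ as carrying the quotient round metrics, for which $\pi$ and $\pi'$ are Riemannian coverings and $\Gamma,\Gamma'$ act as the respective deck transformation groups. The ``if'' direction I would handle first and directly: assuming $g\in\Ot(2n)$ with $g\Gamma g^{-1}=\Gamma'$, for $x\in S^{2n-1}$ and $\gamma\in\Gamma$ one has $g(\gamma x)=(g\gamma g^{-1})(gx)$ with $g\gamma g^{-1}\in\Gamma'$, so $g$ carries $\Gamma$-orbits to $\Gamma'$-orbits and descends to a well-defined $\bar g\colon S^{2n-1}/\Gamma\to S^{2n-1}/\Gamma'$. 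Applying the same to $g^{-1}$ shows $\bar g$ is bijective, and since it is covered by the isometry $g$ while $\pi,\pi'$ are local isometries, $\bar g$ is an isometry.

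For the converse, I would start from an isometry $f\colon S^{2n-1}/\Gamma\to S^{2n-1}/\Gamma'$ and lift it to the cover. The composite $f\circ\pi\colon S^{2n-1}\to S^{2n-1}/\Gamma'$ has simply connected domain (here $n\ge 2$; the case $2n-1=1$ I would treat separately), so by the lifting criterion it admits a smooth lift $g\colon S^{2n-1}\to S^{2n-1}$ with $\pi'\circ g=f\circ\pi$. Lifting $f^{-1}$ in the same way produces a smooth inverse, so $g$ is a diffeomorphism; being covered by local isometries on both sides it is itself a local isometry, hence a global isometry of the round sphere, i.e.\ $g\in\Ot(2n)$.

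The remaining step is to show $g\Gamma g^{-1}=\Gamma'$. Fixing $\gamma\in\Gamma$, I would note $\pi'\circ(g\circ\gamma)=f\circ\pi\circ\gamma=f\circ\pi=\pi'\circ g$, so $g\circ\gamma$ and $g$ are two lifts of $f\circ\pi$ through the regular covering $\pi'$. Since $\Gamma'$ acts freely and is finite (hence discrete), the unique element relating the two lifts pointwise is locally constant on the connected domain, giving a single $\gamma'\in\Gamma'$ with $g\circ\gamma=\gamma'\circ g$, that is $g\gamma g^{-1}=\gamma'$. Thus $\gamma\mapsto g\gamma g^{-1}$ is a homomorphism $\Gamma\to\Gamma'$ and $g\Gamma g^{-1}\subseteq\Gamma'$; repeating the argument with $f^{-1}$ and $g^{-1}$ gives the reverse inclusion, so $g\Gamma g^{-1}=\Gamma'$.

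The main obstacle is the converse implication, and it rests on two standard but essential facts that I would invoke carefully: that a lift of an isometry to the simply connected cover is again a genuine global isometry (allowing its identification with an element of $\Ot(2n)$), and the deck-transformation bookkeeping that upgrades ``two lifts of the same map'' into the desired conjugation $g\Gamma g^{-1}=\Gamma'$. Some additional care is needed at the boundary case $2n-1=1$, where $S^1$ fails to be simply connected and the identification of the deck group must be argued by hand.
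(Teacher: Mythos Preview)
Your argument is correct and is essentially the standard textbook proof of this classical fact. However, the paper does not actually prove this theorem: it simply states it as a classical result and refers the reader to \cite[Lem.~5.1.1]{Wolf-book}. So there is no ``paper's own proof'' to compare against; your proposal supplies what the paper chose to omit, and it does so along the expected lines (lift to the universal cover, identify the isometry group of the round sphere with $\Ot(2n)$, and read off the conjugacy from deck-transformation uniqueness).
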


A \emph{Cauchy-Riemann manifold}, abbreviated as \emph{CR manifold}, is an odd-dimensional differentiable manifold $M$ together with a CR structure. 
If $\dim M=2n-1$, a \emph{CR structure} is a subbundle $T^{(1,0)}(M)$ of the complexified tangent bundle $T_\C(M)$ of (complex) dimension $n-1$ satisfying that $T^{(1,0)}(M)\cap \overline{T^{(1,0)}(M)}=0$ and 
$[\Gamma(U,T^{(1,0)}(M)), \Gamma(U,T^{(1,0)}(M))]\subset \Gamma(U,T^{(1,0)}(M))$, for any open subset $U$ of $M$. 

Two CR manifolds $(M,T^{(1,0)}(M))$ and $(N,T^{(1,0)}(N))$ are called \emph{CR diffeomorphic} (as in \cite{ChenShaw})  or just \emph{equivalent}
if there is a diffeomorphism $F:M\to N$ satisfying $dF(T^{(1,0)}(M))=T^{(1,0)}(N)$. 

The unit sphere $S^{2n-1}$ has a canonical CR structure, namely $T^{(1,0)}S^{2n-1}:= T^{(1,0)}(\C^{n})\cap T_\C(S^{2n-1})$, where $T^{(1,0)}(\C^{n})=\Span_\C\{\frac{\partial}{\partial z_1},\dots, \frac{\partial}{\partial z_n}\}$, the complex vectors annihilating the antiholomorphic functions.
Here, we are seeing $S^{2n-1}$ as the unit sphere in $\C^n$. 
Given $\Gamma$ a finite subgroup of $\Ut(n)$ acting freely on $S^{2n-1}$, the elliptic manifold $S^{2n-1}/\Gamma$ inherits a canonical CR-structure from $S^{2n-1}$.
Although the manifold $S^{2n-1}/\Gamma$ admits other CR structures, we will always assume the canonical one, and we will call it an \emph{elliptic CR manifold}. 

The next result might be well known, but we include a sketch of the proof because the authors were not able to find it stated as below. 

\begin{theorem}\label{thm:CRequiv}
Let $\Gamma,\Gamma'$ be finite subgroups of $\Ut(n)$ acting freely on $S^{2n-1}$. 
The elliptic CR manifolds $S^{2n-1}/\Gamma$ and $S^{2n-1}/\Gamma'$ are equivalent if and only if $\Gamma$ and $\Gamma'$ are conjugate in $\Ut(n)$. 
\end{theorem}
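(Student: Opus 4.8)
The plan is to prove the two implications separately, the forward one carrying all the weight.

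The converse is immediate: if $\Gamma'=u\Gamma u^{-1}$ with $u\in\Ut(n)$, then $u$, being $\C$-linear, preserves $T^{(1,0)}(\C^n)$ and hence restricts to a CR automorphism of $S^{2n-1}$; since $u\Gamma=\Gamma'u$ it descends to a diffeomorphism $S^{2n-1}/\Gamma\to S^{2n-1}/\Gamma'$, which is a CR equivalence because the covering projections are local CR diffeomorphisms.

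For the direct implication I assume $n\ge2$ (for $n=1$ the CR structure is trivial and the statement has to be read with care). First I would lift a given CR equivalence $F$ to the universal covers: as $S^{2n-1}$ is simply connected, $F\circ\pi$ lifts through $\pi'$ to a map $\tilde F\colon S^{2n-1}\to S^{2n-1}$ with $\pi'\circ\tilde F=F\circ\pi$, which is a CR automorphism of the sphere because $\pi,\pi'$ are local CR diffeomorphisms; the usual deck-transformation bookkeeping then yields $\tilde F\,\Gamma\,\tilde F^{-1}=\Gamma'$. The crucial input next is the CR rigidity of the sphere: for $n\ge2$ every CR automorphism of $S^{2n-1}$ is the boundary restriction of a biholomorphism of the unit ball $B^n$, so the CR automorphism group is $\Aut(B^n)\cong\op{PU}(n,1)$ (Chern--Moser/Tanaka, together with Cartan's theorem identifying the isotropy of $0$ with $\Ut(n)$). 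Hence $\tilde F$ is realized by some $g\in\Aut(B^n)$ with $g\Gamma g^{-1}=\Gamma'$. I expect this extension step to be the main obstacle, since it is what upgrades a mere diffeomorphism into an honest ball automorphism and must be quoted from CR rigidity theory.

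It remains to improve the conjugacy in $\op{PU}(n,1)$ to a conjugacy inside $\Ut(n)$. Viewing $\Gamma,\Gamma'\subset\Ut(n)$ as the stabilizer of $0\in B^n$, both groups fix $0$; since $\Gamma'=g\Gamma g^{-1}$ fixes $0$, the group $\Gamma$ fixes the point $p:=g^{-1}(0)$, and therefore $p$ lies in the complex subspace $V=\{v\in\C^n:\gamma v=v\text{ for all }\gamma\in\Gamma\}$. I would then take the Möbius automorphism $\phi_p\in\Aut(B^n)$ with $\phi_p(0)=p$; a direct computation, using that each $\gamma\in\Gamma$ fixes $p$ and preserves $V^\perp$, shows that $\phi_p$ commutes with every element of $\Gamma$. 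Setting $u:=g\,\phi_p$ gives $u(0)=g(p)=0$, so $u\in\Ut(n)$ by Cartan's theorem, while $u\Gamma u^{-1}=g\Gamma g^{-1}=\Gamma'$, which is exactly the conjugacy in $\Ut(n)$ we want.
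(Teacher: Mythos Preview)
Your argument is correct and follows the same overall architecture as the paper's sketch: lift the CR equivalence to the universal cover, invoke the identification of the CR automorphism group of $S^{2n-1}$ with $\op{PU}(n,1)\cong\Aut(B^n)$, and then force the conjugating element into the isotropy group $\Ut(n)$ of the origin.

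The only substantive difference is in that last step. The paper argues geometrically: the extension $\tilde F\in\Aut(B^n)$ descends to a biholomorphism $B^n/\Gamma\to B^n/\Gamma'$, and since each quotient has a unique singular point (the image of $0$, because the free action on $S^{2n-1}$ forces the linear action to be free on $B^n\setminus\{0\}$), one gets $\tilde F(0)=0$ immediately. Your route via the fixed point $p=g^{-1}(0)$ and the commutation of $\phi_p$ with $\Gamma$ is valid, but it overlooks the same simplification: the free-action hypothesis means that no nonidentity $\gamma\in\Gamma$ has eigenvalue $1$, so your subspace $V$ is $\{0\}$ and hence $p=0$ outright. At that point $g$ already lies in $\Ut(n)$ and the $\phi_p$ adjustment is unnecessary. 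Your more elaborate argument would be needed only if one dropped the freeness assumption, in which case it buys a genuinely stronger statement (conjugacy in $\Ut(n)$ for arbitrary finite $\Gamma,\Gamma'\subset\Ut(n)$), but for the theorem as stated the shortcut is available.
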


\begin{proof}[Sketch of proof]
We abbreviate $M=S^{2n-1}/\Gamma$ and $M'=S^{2n-1}/\Gamma'$, and let $\pi:S^{2n-1}\to M$ and $\pi':S^{2n-1}\to M'$ be the natural projections.  
Suppose that $F:(M,T^{(1,0)}(M)) \to (M',T^{(1,0)}(M'))$ is a CR diffeomorphism. 
This map induces $\widetilde F:S^{2n-1}\to S^{2n-1}$ such that the following diagram commutes:
\begin{equation}\label{eq:diagram}
\xymatrix{
	S^{2n-1}  \ar[r]^{\widetilde F} \ar[d]_\pi& S^{2n-1}\ar[d]^{\pi'}
	\\
	M\ar[r]^{F}&M'
}.
\end{equation}
It follows that $\Gamma'=\{\widetilde F\circ L_\gamma\circ \widetilde F^{-1}: \gamma\in\Gamma\}$, where $L_{\gamma}(z)= \gamma z$.

It turns out that $\widetilde F:(S^{2n-1}, T^{(1,0)})\to (S^{2n-1}, T^{(1,0)})$ is a CR automorphism.
It is well known that the set of CR automorphism of $(S^{2n-1}, T^{(1,0)})$ is given by $\op{PU}(n,1)$ acting by Möbius action on the unit ball $B^n$ of $\C^n$, and also on its boundary $S^{2n-1}$. 
However, $\widetilde F$ sends the origin to itself because $B^n/\Gamma$ and $B^n/\Gamma'$ have both a single singularity at the origin, and this forces that $\widetilde F$ lies in the isotropy subgroup of $\op{PU}(n,1)$ at the origin. 
Consequently, there is $A\in \Ut(n)$ such that $\widetilde F(z)=Az$. 
Hence $A\Gamma A^{-1}=\Gamma'$ as asserted.

To show the converse, if $A\Gamma A^{-1}=\Gamma'$ for some $A\in\Ut(n)$, then the map $\widetilde F:\C^n\to \C^n$ given by $\widetilde F(z)= Az$ restricted to $S^{2n-1}$ becomes an automorphism of $(S^{2n-1}, T^{(1,0)}(S^{2n-1}))$ and descends to a CR diffeomorphism $F:(M,T^{(1,0)}(M))\to (M',T^{(1,0)}(M'))$, concluding that the CR manifolds $M$ and $M'$ are equivalent.
\end{proof}

The information above is the only fact that we will use about Riemannian and CR geometry on elliptic manifolds. 
For more details, the reader may consult for instance \cite{ChenShaw,DAngeloLichtblau92,Wolf-book}.

\begin{corollary}\label{cor:isom=>CRequiv}
Let $\Gamma,\Gamma'$ be finite subgroups of $\Ut(n)$ acting freely on $S^{2n-1}$. 
If the elliptic CR manifolds $S^{2n-1}/\Gamma$ and $S^{2n-1}/\Gamma'$ are equivalent, then the corresponding spherical space forms are isometric. 
\end{corollary}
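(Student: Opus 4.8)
The plan is to deduce this directly from the two characterization results immediately preceding it, namely Theorem~\ref{thm:CRequiv} and Theorem~\ref{thm:isometries}, by exploiting the chain of inclusions of groups set up at the very beginning of the preliminaries. The key observation is that $\Ut(n)$ embeds into $\SO(2n)$, and hence into $\Ot(2n)$, via the real-matrix realization of complex multiplication described in Subsection~\ref{subsec:RiemCR}. Under this embedding, any conjugation that can be carried out inside $\Ut(n)$ is, in particular, a conjugation inside $\Ot(2n)$.

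Concretely, I would argue as follows. Assume the elliptic CR manifolds $S^{2n-1}/\Gamma$ and $S^{2n-1}/\Gamma'$ are equivalent. By Theorem~\ref{thm:CRequiv}, this forces $\Gamma$ and $\Gamma'$ to be conjugate in $\Ut(n)$, say $A\Gamma A^{-1}=\Gamma'$ for some $A\in\Ut(n)$. Viewing $A$ through the embedding $\Ut(n)\hookrightarrow\SO(2n)\subset\Ot(2n)$, the same element $A$ conjugates $\Gamma$ to $\Gamma'$ within $\Ot(2n)$, so $\Gamma$ and $\Gamma'$ are conjugate in $\Ot(2n)$. Applying Theorem~\ref{thm:isometries} then gives that the corresponding spherical space forms $S^{2n-1}/\Gamma$ and $S^{2n-1}/\Gamma'$ are isometric, which is exactly the claim.

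There is essentially no obstacle here: the statement is a formal consequence of the fact that $\Ut(n)$-conjugacy is a stronger condition than $\Ot(2n)$-conjugacy, combined with the two equivalences already established. The only point that deserves an explicit word is the compatibility of the embedding with conjugation, i.e.\ that the realization $\Ut(n)\hookrightarrow\Ot(2n)$ is a group homomorphism, so that conjugate pairs in $\Ut(n)$ map to conjugate pairs in $\Ot(2n)$; this is immediate from the definition of the embedding. I would therefore expect the written proof to occupy only two or three lines.
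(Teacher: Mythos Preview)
Your proposal is correct and matches the paper's own proof essentially line for line: invoke Theorem~\ref{thm:CRequiv} to get conjugacy in $\Ut(n)$, pass to conjugacy in $\Ot(2n)$ via the embedding $\Ut(n)\hookrightarrow\SO(2n)\subset\Ot(2n)$, and conclude with Theorem~\ref{thm:isometries}. The paper's version is indeed just two sentences, exactly as you anticipated.
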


\begin{proof}
Theorem~\ref{thm:CRequiv} implies that $\Gamma$ and $\Gamma'$ are conjugate in $\Ut(n)$ by Theorem~\ref{thm:CRequiv}. 
Hence, $\Gamma$ and $\Gamma'$ are also conjugate in $\Ot(2n)$, which gives the required isometry by Theorem~\ref{thm:isometries}. 
\end{proof}

\begin{remark}\label{rem:converse-isom=>CRequiv}
The converse of Corollary~\ref{cor:isom=>CRequiv} does not hold. 
There are isometric spherical space forms $S^{2n-1}/\Gamma$ and $S^{2n-1}/\Gamma'$ such that the corresponding CR manifolds are not equivalent. 
In other words, there exist finite subgroups $\Gamma,\Gamma'$ of $\Ut(n)$ acting freely on $S^{2n-1}$ that are conjugate in $\Ot(2n)$ but not in $\Ut(n)$. 
Remark~\ref{rem:lensspaces} includes an explicit example of this situation. 
\end{remark}

\subsection{Lens spaces}

For $k\in\N$ and $s=(s_1,\dots,s_n)\in\Z^n$ satisfying that $\gcd(k,s_i)=1$ for all $i$, we denote by $\Gamma_{k,s}$ the cyclic subgroup of $\Ut(n)$ generated by 
\begin{equation}
\gamma_{k,s} :=
\begin{pmatrix}
\xi_k^{s_1} \\ &\ddots \\ && \xi_k^{s_n}
\end{pmatrix},
\end{equation}
where $\xi_k=e^{\frac{2\pi\mi }{k}}$.
Under the embedding $\Ut(n)\hookrightarrow \SO(2n)$, $\gamma_{q,s}$ corresponds to 
\begin{equation}
\begin{pmatrix}
R(\frac{2\pi s_1}{k})\\ 
&\ddots\\
&& R(\frac{2\pi s_n}{k})
\end{pmatrix}
,
\end{equation}
where
$
R(\theta)=
\left(\begin{smallmatrix}
\cos \theta & \sin \theta\\
-\sin \theta & \cos\theta 
\end{smallmatrix}\right).
$

It turns out that $\Gamma_{k,s}$ acts freely on $S^{2n-1}$. 
The quotient $L(k;s):=S^{2n-1}/\Gamma_{k,s}$ is called a \emph{lens space}. 
The fundamental group of $L(k;s)$ is cyclic of order $k$. 
The standard differentiable structure on $S^{2n-1}$ induces a canonical differentiable structure on every lens space. 
According to the convention in Subsection~\ref{subsec:RiemCR}, we will call $L(k;s)$ a \emph{Riemannian lens space} when it is endowed with the canonical Riemannian metric of constant sectional curvature one, and a \emph{CR lens space} when it is endowed with the canonical CR structure.

The following result is well known. 

\begin{proposition}\label{prop:isometria}
Let $L(k;s)$ and $L(k;s')$ be two lens spaces of dimension $2n-1$. 
The following assertions are equivalent: 
\begin{enumerate}
\item $\Gamma_{k,s}$ and $\Gamma_{k,s'}$ are conjugate in $\Ot(2n)$. 

\item $L(k;s)$ and $L(k;s')$ are homeomorphic. 

\item $L(k;s)$ and $L(k;s')$ are diffeomorphic (with their canonical differentiable structures). 

\item $L(k;s)$ and $L(k;s')$ are isometric as Riemannian lens spaces. 

\item There are $\sigma$ a permutation of $\{1,\dots,n\}$, $\epsilon_i\in\{\pm1\}$ for each $i=1,\dots,n$, and $c\in\Z$ prime to $k$ such that 
\begin{equation}
s_i \equiv \epsilon_i c s'_{\sigma(i)} \pmod k
\qquad\text{for all }i=1,\dots,n. 
\end{equation}
\end{enumerate}
\end{proposition}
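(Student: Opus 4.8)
The plan is to prove the five equivalences by closing the cycle $(5)\Rightarrow(1)\Rightarrow(4)\Rightarrow(3)\Rightarrow(2)\Rightarrow(5)$, in which every arrow but the last is either elementary or already available. Indeed, $(1)\Leftrightarrow(4)$ is precisely Theorem~\ref{thm:isometries} applied to the cyclic groups $\Gamma=\Gamma_{k,s}$ and $\Gamma'=\Gamma_{k,s'}$, while $(4)\Rightarrow(3)\Rightarrow(2)$ are the trivial observations that every isometry is a diffeomorphism and every diffeomorphism a homeomorphism. The genuine content therefore splits into two parts: the elementary equivalence $(1)\Leftrightarrow(5)$, which I would establish by a direct eigenvalue computation, and the topological rigidity $(2)\Rightarrow(5)$ (equivalently $(2)\Rightarrow(1)$), which is the only step requiring input beyond linear algebra.

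For $(5)\Rightarrow(1)$ I would argue as follows. Raising the generator to a power $c$ prime to $k$ gives $\gamma_{k,s'}^{\,c}=\gamma_{k,cs'}$, again a generator of $\Gamma_{k,s'}$, so it suffices to conjugate $\gamma_{k,s}$ to $\gamma_{k,cs'}$ inside $\Ot(2n)$. Under the embedding $\Ut(n)\hookrightarrow\SO(2n)$ these are the block-diagonal matrices $\bigoplus_i R(\tfrac{2\pi s_i}{k})$ and $\bigoplus_i R(\tfrac{2\pi c s'_i}{k})$, and the congruence $s_i\equiv \epsilon_i c\, s'_{\sigma(i)}\pmod k$ says exactly that, after permuting the planar blocks according to $\sigma$ and applying a reflection $\diag(1,-1)$ in each block with $\epsilon_i=-1$ (which conjugates $R(\theta)$ to $R(-\theta)$), the first matrix is carried to the second; conjugating a generator conjugates the whole cyclic group, yielding $(1)$. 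The converse $(1)\Rightarrow(5)$ runs the same computation in reverse: if $Q\in\Ot(2n)$ satisfies $Q\Gamma_{k,s}Q^{-1}=\Gamma_{k,s'}$, then $Q\gamma_{k,s}Q^{-1}$ is a generator of $\Gamma_{k,s'}$, hence equal to $\gamma_{k,cs'}$ for some $c$ prime to $k$; comparing eigenvalue multisets gives $\{\xi_k^{\pm s_i}\}_i=\{\xi_k^{\pm c s'_i}\}_i$, and matching the conjugate pairs $\{\xi_k^{a},\xi_k^{-a}\}$ produces the permutation $\sigma$ together with the signs $\epsilon_i$ of $(5)$. (The degenerate cases $k\le 2$ can be checked directly.)

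The remaining arrow $(2)\Rightarrow(5)$ is where the difficulty concentrates, and it is what I would flag as the main obstacle: passing from a bare homeomorphism to the rigid arithmetic normal form cannot be done by any soft argument and instead rests on the classical homeomorphism classification of lens spaces. I would not reprove it but cite the standard theory, whose relevant invariant is the Reidemeister--Franz--de Rham torsion (together with the linking form), the step from homeomorphism to simple homotopy equivalence being supplied by the topological invariance of Whitehead torsion; this forces the defining data $s$ and $s'$ to be related precisely as in $(5)$ and closes the cycle. It is worth emphasizing that $(2)$ is strictly finer than homotopy equivalence of lens spaces, which is governed only by the products $\prod_i s_i$ and $\prod_i s_i'$ up to sign and an $n$-th power modulo $k$; it is genuinely the homeomorphism type, not the homotopy type, that is needed to recover the full data in $(1)$ and $(4)$.
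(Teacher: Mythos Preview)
The paper does not prove this proposition at all: it is stated as ``well known'' and left without argument, so there is nothing to compare your approach against. Your cycle $(5)\Rightarrow(1)\Rightarrow(4)\Rightarrow(3)\Rightarrow(2)\Rightarrow(5)$ is the standard way to organise the proof, and the individual steps are handled correctly: the eigenvalue matching for $(1)\Leftrightarrow(5)$ is accurate (including the remark that $k\le 2$ is degenerate), the appeal to Theorem~\ref{thm:isometries} for $(1)\Leftrightarrow(4)$ is exactly what the paper intends, and you are right to isolate $(2)\Rightarrow(5)$ as the only nontrivial input, resting on Reidemeister--Franz--de~Rham torsion together with Chapman's topological invariance of Whitehead torsion. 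Your closing caveat that homotopy equivalence would not suffice is also on point.
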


The analogous result in the context of CR manifolds is the following.

\begin{proposition}\label{prop:CR-isometria}
Let $L(k;s)$ and $L(k;s')$ be two lens spaces of dimension $2n-1$. 
The following assertions are equivalent:
\begin{enumerate}
\item $\Gamma_{k,s}$ and $\Gamma_{k,s'}$ are conjugate in $\Ut(n)$. 

\item $L(k;s)$ and $L(k;s')$ are equivalent as CR lens spaces. 

\item there are $\sigma$ a permutation of $\{1,\dots,n\}$, and $c\in\Z$ prime to $k$ such that 
\begin{equation}
s_i \equiv c s'_{\sigma(i)} \pmod k
\qquad\text{for all }i=1,\dots,n. 
\end{equation}
\end{enumerate}
\end{proposition}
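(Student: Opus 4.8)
The plan is to reduce everything to the unitary conjugacy criterion. First I would obtain the equivalence of (1) and (2) directly from Theorem~\ref{thm:CRequiv}, applied to $\Gamma=\Gamma_{k,s}$ and $\Gamma'=\Gamma_{k,s'}$, since both groups lie in $\Ut(n)$ and act freely on $S^{2n-1}$. This leaves only the equivalence of (1) and (3), i.e.\ a concrete description of when the two cyclic groups are conjugate inside $\Ut(n)$, and the whole argument rests on the fact that unitary conjugation permutes eigenvalues without altering them.

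For (1) $\Rightarrow$ (3) I would take $A\in\Ut(n)$ with $A\Gamma_{k,s}A^{-1}=\Gamma_{k,s'}$ and observe that $A\gamma_{k,s}A^{-1}$ is an element of $\Gamma_{k,s'}$ of order exactly $k$, hence a generator; therefore $A\gamma_{k,s}A^{-1}=\gamma_{k,s'}^{\,c}$ for some $c$ with $\gcd(c,k)=1$. Since both $\gamma_{k,s}$ and $\gamma_{k,s'}^{\,c}$ are diagonal and unitary conjugation preserves the multiset of eigenvalues, one gets the multiset equality $\{\xi_k^{s_1},\dots,\xi_k^{s_n}\}=\{\xi_k^{cs'_1},\dots,\xi_k^{cs'_n}\}$, which yields a permutation $\sigma$ of $\{1,\dots,n\}$ with $\xi_k^{s_i}=\xi_k^{cs'_{\sigma(i)}}$, that is $s_i\equiv cs'_{\sigma(i)}\pmod k$ for all $i$.

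For (3) $\Rightarrow$ (1) I would read the congruences as the statement that the diagonal of $\gamma_{k,s'}^{\,c}$ is a permutation of the diagonal of $\gamma_{k,s}$; then the permutation matrix $P_\sigma$, being real orthogonal and hence an element of $\Ut(n)$, satisfies $P_\sigma\gamma_{k,s}P_\sigma^{-1}=\gamma_{k,s'}^{\,c}$. As $\gcd(c,k)=1$, the power $\gamma_{k,s'}^{\,c}$ still generates $\Gamma_{k,s'}$, so $P_\sigma\Gamma_{k,s}P_\sigma^{-1}=\Gamma_{k,s'}$, which is (1).

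I expect the only delicate point to be the passage through generators in (1) $\Rightarrow$ (3): one must argue that a unitary conjugation carries a generator to a generator, which is exactly what forces the single scalar $c$ coprime to $k$ and forbids any per-coordinate complex conjugation. This is the structural reason why condition (3) omits the sign factors $\epsilon_i$ appearing in the Riemannian criterion of Proposition~\ref{prop:isometria}, where the larger ambient group $\Ot(2n)$ does allow replacing an eigenvalue $\xi_k^{s_i}$ by its conjugate $\xi_k^{-s_i}$.
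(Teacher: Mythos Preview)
Your proof is correct and follows essentially the same route as the paper: the equivalence (1)$\Leftrightarrow$(2) is pulled directly from Theorem~\ref{thm:CRequiv}, and (1)$\Leftrightarrow$(3) is obtained by comparing eigenvalues after writing the conjugated generator as a power $\gamma_{k,s'}^{\,c}$. The only cosmetic difference is that the paper deduces $\gcd(c,k)=1$ from the spectral equality itself (the eigenvalues of $\gamma_{k,s}$ are primitive $k$-th roots), whereas you obtain it from the order-preserving property of conjugation; your explicit permutation-matrix argument for (3)$\Rightarrow$(1) simply fills in what the paper leaves as an ``easy exercise''.
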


\begin{proof}
The equivalence between (1) and (2) follows immediately by Theorem~\ref{thm:CRequiv}. 
Suppose that $\Gamma_{k,s}$ and $\Gamma_{k,s'}$ are conjugate in $\Ut(n)$.
Thus $\gamma_{k,s} = A\gamma_{k,s'}^c A^{-1}$ for some $A\in \Ut(n)$ and $c\in\Z$.
Note that the spectra of $\gamma_{k,s}$ and $\gamma_{k,s'}^c$ coincide, thus $(c,k)=1$ and, there is a permutation $\sigma$ such that $\xi_k^{s_i} =\xi_{k}^{cs'_{\sigma(i)}}$ for all $1\leq i\leq n$, which implies the required condition in (3). 
Finally, (3)$\Rightarrow$(1) is an easy exercise (see also \cite[Thm.~4.13]{Yunus-cia}). 
\end{proof}

\begin{remark}\label{rem:lensspaces}
It is easy to conclude from Propositions~\ref{prop:isometria}--\ref{prop:CR-isometria} that $L(3;1,1)$ and $L(3;1,2)$ are isometric as Riemannian lens spaces, but not equivalent as CR lens spaces. 
\end{remark}

\subsection{Kohn Laplacian on elliptic CR manifolds}\label{subsec:Kohnelliptic}
An arbitrary CR manifold has associated a distinguished differential operator, the \emph{Kohn Laplacian}. 
Two arbitrary CR manifolds are called \emph{CR isospectral} if the spectra of their corresponding Kohn Laplacian coincide. 
Our attention restrict to the spectrum of the Kohn Laplacian on a elliptic CR manifolds, that we next describe.

For $p,q\in\N_0$, let 
\begin{equation}\label{eq:PqpHpq}
\begin{aligned}
\PP_{p,q} &= \Span_\C\left\{ z_1^{a_1}\dots z_n^{a_n} \bar z_1^{b_1} \dots \bar z_n^{b_n} :  
a_i,b_i\in\N_0\;\forall i,\; {\textstyle \sum\limits_{i=1}^n a_i}=p, {\textstyle \sum\limits_{i=1}^n b_i}=q \right\}
,\\ 
\HH_{p,q} &=\{P \in \PP_{q,p}: \Delta P=0\},
\end{aligned}
\end{equation}
where $\Delta=\sum_{j=1}^n \frac{\partial^2}{\partial z_j\partial \bar z_j}$. 
In words, $\HH_{p,q}$ denotes the space of harmonic polynomials on the complex variables $z_1,\dots,z_n,\bar z_1,\dots,\bar z_n$ of bi-degree $(p,q)$. 
One has 
\begin{equation}\label{eq:Ppq=Hpq+|z|^2Pp-1q-1}
\PP_{p,q}=\HH_{p,q}\oplus (z_1\bar z_1+\dots+z_n\bar z_n)\PP_{p-1,q-1}. 
\end{equation}

It is well known that 
\begin{equation}\label{eq:L^2(S^2n-1)}
L^2(S^{2n-1})= \widehat{\bigoplus_{p,q\in\N_0}} \HH_{p,q}, 
\end{equation}
when the elements in $\HH_{p,q}$ are understood as complex-valued functions on $S^{2n-1}$. 
Folland~\cite{Folland} described the spectrum of the Kohn Laplacian $\square_b$ on the sphere $S^{2n-1}$ as follows: 
$\HH_{p,q}$ is included in the eigenspace of $\square_b$ associated to the eigenvalue $\lambda_{p,q}:=2q(p+n-1)$. 
Consequently, every eigenvalue of $\square_b$ is of the form $2r$ with $r\in\Z$ and $r\geq n-1$, and the multiplicity of $2r$ is given by 
\begin{equation}
\mult(2r) = \sum_{p,q\geq0 \,: \,   2r=2q(p+n-1)} \dim\HH_{p,q}. 
\end{equation}

The action of $\Ut(n)$ on $S^{2n-1}$ preserves the CR-structure and commutes with $\square_b$. 
Moreover, the left regular representation of $\Ut(n)$ on $L^2(S^{2n-1})$ given by $(A\cdot f)(z)=f(A^{-1}z)$ for $A\in\Ut(n)$ and $f\in L^2(S^{2n-1})$, decomposes in irreducible modules as in \eqref{eq:L^2(S^2n-1)}.

Let $\Gamma$ be a subgroup of $\Ut(n)$ acting freely on $S^{2n-1}$. 
We denote by $\square_{b,\Gamma}$ the Kohn Laplacian on the elliptic CR manifold $S^{2n-1}/\Gamma$. 
Clearly, any eigenfunction in $\HH_{p,q}$ which is invariant by $\Gamma$ descends to an eigenfunction on $S^{2n-1}/\Gamma$ with the same eigenvalue. 
Moreover, we have that 
\begin{equation}\label{eq:L^2=sumHpq}
L^2(S^{2n-1}/\Gamma)=\widehat{\bigoplus_{p,q\geq0}} \HH_{p,q}^\Gamma
\end{equation}
where $\HH_{p,q}^\Gamma=\{f\in\HH_{p,q}:  \gamma \cdot f=f\ \ \forall \,\gamma\in\Gamma\}$.
Therefore, the multiplicity of the eigenvalue $2r$ of $\square_{b,\Gamma}$ on $S^{2n-1}/\Gamma$ is given by 
\begin{equation}\label{eq:mult_Gamma(2r)}
\mult_{\Gamma}(2r) := \sum_{{p,q\geq0 \, : \, 2r=2q(p+n-1)}}  \dim\HH_{p,q}^\Gamma. 
\end{equation}
Note that this number might be zero, which means that $2r$ is not in the spectrum $\Spec(\square_{b,\Gamma})$ of $\square_{b,\Gamma}$, though it is an eigenvalue of $\square_{b,\Gamma}$ on $S^{2n-1}$.

\begin{proposition}\label{prop2:Hisop=>CRisosp} 
Let $\Gamma,\Gamma'$ be finite subgroups of $\Ut(n)$ acting freely on $S^{2n-1}$.
If
\begin{equation}\label{eq:dimHpq^Gamma=dimHpq^Gamma'}
\dim\HH_{p,q}^{\Gamma} = \dim\HH_{p,q}^{\Gamma'}
\quad\text{for all }p,q\geq0, 
\end{equation}
then the elliptic CR manifolds $S^{2n-1}/\Gamma$ and $S^{2n-1}/\Gamma'$ are CR isospectral, that is, $\Spec(\square_{b,\Gamma})=\Spec(\square_{b,\Gamma'})$. 
\end{proposition}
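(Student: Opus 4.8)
The plan is to reduce the asserted spectral equality to a termwise comparison of eigenvalue multiplicities, after which the hypothesis applies directly through formula~\eqref{eq:mult_Gamma(2r)}. First I would recall from Subsection~\ref{subsec:Kohnelliptic} that, via the decomposition~\eqref{eq:L^2=sumHpq}, every eigenfunction of $\square_{b,\Gamma}$ sits inside some $\HH_{p,q}^\Gamma$ with eigenvalue $2q(p+n-1)$, so that the spectrum is completely recorded by the multiplicity function $r\mapsto\mult_\Gamma(2r)$: each $2r$ lies in $\Spec(\square_{b,\Gamma})$ precisely when $\mult_\Gamma(2r)>0$, and it then occurs with that multiplicity. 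Hence it is enough to establish $\mult_\Gamma(2r)=\mult_{\Gamma'}(2r)$ for all $r$.

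Next I would fix $r$ and invoke~\eqref{eq:mult_Gamma(2r)}, which expresses $\mult_\Gamma(2r)$ as the sum of the dimensions $\dim\HH_{p,q}^\Gamma$ over the set of pairs $(p,q)$ with $p,q\ge0$ and $2r=2q(p+n-1)$. The key observation is that this index set depends only on $r$ and $n$, never on the group; for a positive eigenvalue it is moreover finite, since $q(p+n-1)=r$ with $q\ge1$ admits only finitely many solutions in nonnegative integers. Applying the hypothesis~\eqref{eq:dimHpq^Gamma=dimHpq^Gamma'} to each individual summand and adding over this common index set would then give $\mult_\Gamma(2r)=\mult_{\Gamma'}(2r)$; since $r$ is arbitrary, $\Spec(\square_{b,\Gamma})=\Spec(\square_{b,\Gamma'})$. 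The kernel, corresponding to $r=0$, is handled in the same manner: the hypothesis matches $\dim\HH_{p,0}^\Gamma$ with $\dim\HH_{p,0}^{\Gamma'}$ for every $p$.

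I do not expect any genuine obstacle here: the proposition is a bookkeeping consequence of the spectral decomposition~\eqref{eq:L^2=sumHpq} together with the multiplicity formula~\eqref{eq:mult_Gamma(2r)}. The only point demanding even momentary care is that the set of indices over which one sums is identical for $\Gamma$ and $\Gamma'$, which is exactly what lets the termwise equality of~\eqref{eq:dimHpq^Gamma=dimHpq^Gamma'} be promoted to equality of the two multiplicity sums.
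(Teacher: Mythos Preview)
Your argument is correct and is exactly the reasoning the paper has in mind: the proposition is stated without an explicit proof because it is immediate from the multiplicity formula~\eqref{eq:mult_Gamma(2r)}, and your write-up simply spells out that immediacy. There is nothing to add or correct.
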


\begin{remark}\label{rem:convese}
It is not known whether the converse is true or not. 
In other words, the condition \eqref{eq:dimHpq^Gamma=dimHpq^Gamma'} is sufficient for CR isospectrality, but it is not clear whether it is necessary as well. 
\end{remark}

Mimicking Ikeda's approach to study isospectrality among spherical space forms (in the Riemannian case), we set
\begin{equation}\label{eq:F}
F_{\Gamma}(z,w)=\sum_{p,q\geq0} \dim\HH_{p,q}^\Gamma \, z^pw^q, 
\end{equation}
where $z,w$ are formal variables (\cite[\S4.2]{Yunus-cia} defines it for the special case of CR lens spaces).
Taking into account that \eqref{eq:dimHpq^Gamma=dimHpq^Gamma'} holds if and only if $F_{\Gamma}(z,w)=F_{\Gamma'}(z,w)$, Proposition~\ref{prop2:Hisop=>CRisosp} translates into the following result, which will be extremely important in the rest of the article. 

\begin{proposition}\label{prop:F_Gamma=>CR-isosp}
Let $\Gamma,\Gamma'$ be finite subgroups of $\Ut(n)$ acting freely on $S^{2n-1}$. 
If $F_{\Gamma}(z,w) = F_{\Gamma'}(z,w)$, then $S^{2n-1}/\Gamma$ and $S^{2n-1}/\Gamma'$ are CR isospectral.
\end{proposition}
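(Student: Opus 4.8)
The plan is to observe that this proposition is nothing more than a reformulation of Proposition~\ref{prop2:Hisop=>CRisosp} phrased in terms of the generating function $F_\Gamma$. First I would recall that, by the definition \eqref{eq:F}, both $F_\Gamma(z,w)$ and $F_{\Gamma'}(z,w)$ are formal power series in the two variables $z,w$ whose coefficient of $z^pw^q$ is the nonnegative integer $\dim\HH_{p,q}^\Gamma$, respectively $\dim\HH_{p,q}^{\Gamma'}$.

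The key step is the elementary fact that two formal power series coincide if and only if all their coefficients agree. Applied to $F_\Gamma$ and $F_{\Gamma'}$, this shows that the hypothesis $F_\Gamma(z,w)=F_{\Gamma'}(z,w)$ is equivalent to the family of equalities $\dim\HH_{p,q}^\Gamma=\dim\HH_{p,q}^{\Gamma'}$ for all $p,q\geq0$, which is exactly condition \eqref{eq:dimHpq^Gamma=dimHpq^Gamma'}.

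With this identification in hand, I would simply invoke Proposition~\ref{prop2:Hisop=>CRisosp}, which guarantees that \eqref{eq:dimHpq^Gamma=dimHpq^Gamma'} implies $\Spec(\square_{b,\Gamma})=\Spec(\square_{b,\Gamma'})$, that is, that the elliptic CR manifolds $S^{2n-1}/\Gamma$ and $S^{2n-1}/\Gamma'$ are CR isospectral. This closes the argument.

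As for difficulties, there is essentially no obstacle here: all the substantive work has already been done in establishing Proposition~\ref{prop2:Hisop=>CRisosp}, through the $\Gamma$-invariant decomposition \eqref{eq:L^2=sumHpq} and the multiplicity formula \eqref{eq:mult_Gamma(2r)}. The one point worth stressing is that equality must be recorded for each bidegree $(p,q)$ separately, rather than only for the combined multiplicities $\mult_\Gamma(2r)$; this finer bigraded information is precisely what the \emph{two-variable} function $F_\Gamma$ encodes, and it is the reason this bookkeeping device is the appropriate one for all the constructions that follow.
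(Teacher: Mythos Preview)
Your proposal is correct and matches the paper's own justification: the paper states the proposition immediately after observing that \eqref{eq:dimHpq^Gamma=dimHpq^Gamma'} holds if and only if $F_{\Gamma}(z,w)=F_{\Gamma'}(z,w)$, and presents it explicitly as a translation of Proposition~\ref{prop2:Hisop=>CRisosp}. There is no separate proof in the paper beyond this remark, so your argument is exactly what was intended.
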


Although $F_{\Gamma}(z,w)$ determines the spectrum of the Kohn Laplacian on $S^{2n-1}/\Gamma$, the converse is not necessarily true according to Remark~\ref{rem:convese}. 
The following question is open.

\begin{question}
Are there finite subgroups $\Gamma,\Gamma'$ of $\Ut(n)$ acting freely on $S^{2n-1}$ such that the elliptic CR manifolds $S^{2n-1}/\Gamma$ and $S^{2n-1}/\Gamma'$ are CR isospectral but $F_{\Gamma}(z,w)\neq F_{\Gamma'}(z,w)$?
\end{question}

Of course, if two CR manifolds are equivalent, then they are CR isospectral (see for instance \cite[\S4.3]{Yunus-cia}). 
The next result shows that the CR isospectrality between equivalent elliptic CR manifolds is necessarily obtained via Proposition~\ref{prop2:Hisop=>CRisosp}. 

\begin{proposition}
Let $\Gamma,\Gamma'$ be finite subgroups of $\Ut(n)$ acting freely on $S^{2n-1}$. 
If $S^{2n-1}/\Gamma$ and $S^{2n-1}/\Gamma'$ are equivalent, then $F_{\Gamma}(z,w)=F_{\Gamma'}(z,w)$. 
\end{proposition}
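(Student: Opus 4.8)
The plan is to combine Theorem~\ref{thm:CRequiv} with the fact that each $\HH_{p,q}$ is a $\Ut(n)$-module under the left regular representation. By Theorem~\ref{thm:CRequiv}, the equivalence of the elliptic CR manifolds $S^{2n-1}/\Gamma$ and $S^{2n-1}/\Gamma'$ means precisely that there is $A\in\Ut(n)$ with $\Gamma'=A\Gamma A^{-1}$. Since $F_{\Gamma}(z,w)=\sum_{p,q\geq0}\dim\HH_{p,q}^{\Gamma}\,z^pw^q$, it suffices to prove that $\dim\HH_{p,q}^{\Gamma}=\dim\HH_{p,q}^{\Gamma'}$ for every pair $(p,q)$, and for this I would exhibit an explicit linear isomorphism between the two invariant subspaces.

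The key step is to check that the operator $f\mapsto A\cdot f$, where $(A\cdot f)(z)=f(A^{-1}z)$, carries $\HH_{p,q}^{\Gamma}$ onto $\HH_{p,q}^{\Gamma'}$. First, $A$ preserves $\HH_{p,q}$ because $\Ut(n)$ acts on $L^2(S^{2n-1})$ preserving the decomposition \eqref{eq:L^2(S^2n-1)}, and it does so invertibly with inverse $A^{-1}$. Next, recalling that the prescription $(g\cdot f)(z)=f(g^{-1}z)$ defines a genuine left action, for $f\in\HH_{p,q}^{\Gamma}$ and an arbitrary element $\gamma'=A\gamma A^{-1}\in\Gamma'$ one computes
\begin{equation}
\gamma'\cdot(A\cdot f)=(A\gamma A^{-1})\cdot(A\cdot f)=(A\gamma)\cdot f=A\cdot(\gamma\cdot f)=A\cdot f,
\end{equation}
where the last equality uses $\gamma\cdot f=f$. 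Hence $A\cdot f\in\HH_{p,q}^{\Gamma'}$, and the same argument applied to $A^{-1}$ and $\Gamma'$ shows that the inverse operator sends $\HH_{p,q}^{\Gamma'}$ back into $\HH_{p,q}^{\Gamma}$. Therefore $f\mapsto A\cdot f$ restricts to a linear isomorphism $\HH_{p,q}^{\Gamma}\to\HH_{p,q}^{\Gamma'}$.

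Equating dimensions for every $(p,q)$ then yields $F_{\Gamma}(z,w)=F_{\Gamma'}(z,w)$, as required. The computation is essentially formal once the conjugacy from Theorem~\ref{thm:CRequiv} is in hand; the only point demanding care is bookkeeping with the left-action convention $(g\cdot f)(z)=f(g^{-1}z)$, so that the inverses fall into place and the conjugating matrix $A$ genuinely intertwines the $\Gamma$- and $\Gamma'$-actions on each harmonic block $\HH_{p,q}$. I expect no serious obstacle beyond this, since everything reduces to the invariance of the finite-dimensional spaces $\HH_{p,q}$ under $\Ut(n)$.
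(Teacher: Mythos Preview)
Your proof is correct and follows essentially the same approach as the paper: invoke Theorem~\ref{thm:CRequiv} to obtain $A\in\Ut(n)$ conjugating $\Gamma$ to $\Gamma'$, and then use the action of $A$ (the paper uses $A^{-1}$ in the reverse direction) to exhibit a linear isomorphism between $\HH_{p,q}^{\Gamma}$ and $\HH_{p,q}^{\Gamma'}$ for every $(p,q)$. Your write-up is slightly more detailed in verifying the intertwining property, but the argument is the same.
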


\begin{proof}
From Theorem~\ref{thm:CRequiv}, we have that $\Gamma$ and $\Gamma'$ are conjugate in $\Ut(n)$ due to the equivalence between $S^{2n-1}/\Gamma$ and $S^{2n-1}/\Gamma'$.
There is $A\in \Ut(n)$ such that $A\Gamma A^{-1}=\Gamma'$. 
Hence
\begin{equation*}
\begin{aligned}
\dim \HH_{p,q}^{\Gamma'} &
= \dim \HH_{p,q}^{A\Gamma A^{-1}} 
= \dim \HH_{p,q}^{\Gamma} .
\end{aligned}
\end{equation*}
The last identity follows since the map $\HH_{p,q}^{A\Gamma A^{-1}}\to \HH_{p,q}^{\Gamma}$, sending $f\mapsto A^{-1}\cdot f$, is an isomorphism of vector spaces. 
\end{proof}

\section{A rational expression for the generating function}\label{sec:rationalexpression}

The main goal of this section is to give a rational expression of the generating function $F_{\Gamma}(z,w)$ associated to the spectrum of the Kohn Laplacian of the elliptic CR manifold $S^{2n-1}/\Gamma$ introduced in \eqref{eq:F}. 
Such expression extends to elliptic CR manifolds the formula obtained in \cite[Thm.~4.10]{Yunus-cia}, which is valid for CR lens spaces. 
The proof follows the one by Ikeda in \cite[Thm.~2.2]{Ikeda80_3-dimI}, which proves an analogous formula in the Riemannian case. 

\begin{theorem}\label{thm:F_Gamma(z,w)}
For $\Gamma$ a finite subgroup of $\Ut(n)$ acting freely on $S^{2n-1}$, we have that 
\begin{equation}
F_{\Gamma}(z,w) = \frac1{|\Gamma|} \sum_{\gamma\in\Gamma} \frac{1-zw}{\det(z-\gamma) \det(w-\bar \gamma)}
,
\end{equation} 
where $\det(x-\gamma)=\prod_{\lambda} (x-\lambda)$ with $\lambda$ running over the set of eigenvalues of $\gamma$. 
\end{theorem}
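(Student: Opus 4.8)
The plan is to compute $F_\Gamma$ by character theory, reducing the problem to a single group element and a geometric-series computation. First I would invoke the projection formula for the dimension of an invariant subspace: writing $\chi_{p,q}$ for the character of $\Gamma$ acting on $\HH_{p,q}$ (the restriction of the $\Ut(n)$-action $(A\cdot f)(z)=f(A^{-1}z)$ described before Proposition~\ref{prop2:Hisop=>CRisosp}), one has
\[
\dim\HH_{p,q}^\Gamma = \frac{1}{|\Gamma|}\sum_{\gamma\in\Gamma}\chi_{p,q}(\gamma).
\]
Substituting this into \eqref{eq:F} and interchanging the (finite) inner sum with the formal outer sum yields
\[
F_\Gamma(z,w) = \frac{1}{|\Gamma|}\sum_{\gamma\in\Gamma}\Phi_\gamma(z,w),
\qquad
\Phi_\gamma(z,w) := \sum_{p,q\geq0}\chi_{p,q}(\gamma)\,z^pw^q ,
\]
so it suffices to identify $\Phi_\gamma(z,w)$ with $\dfrac{1-zw}{\det(z-\gamma)\det(w-\bar\gamma)}$ for each fixed $\gamma$.

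Next I would pass from $\HH_{p,q}$ to the full polynomial spaces $\PP_{p,q}$, whose characters are easier to sum. Since $z_1\bar z_1+\dots+z_n\bar z_n$ is $\Ut(n)$-invariant, multiplication by it is a $\Gamma$-equivariant injection $\PP_{p-1,q-1}\hookrightarrow\PP_{p,q}$, so the decomposition \eqref{eq:Ppq=Hpq+|z|^2Pp-1q-1} is a splitting of $\Gamma$-modules. Writing $\psi_{p,q}(\gamma)$ for the character of $\Gamma$ on $\PP_{p,q}$, this gives $\chi_{p,q}(\gamma)=\psi_{p,q}(\gamma)-\psi_{p-1,q-1}(\gamma)$ (with $\psi_{p,q}=0$ whenever $p<0$ or $q<0$). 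Multiplying by $z^pw^q$, summing, and reindexing the shifted term produces the factorization
\[
\Phi_\gamma(z,w) = (1-zw)\,\Psi_\gamma(z,w),
\qquad
\Psi_\gamma(z,w) := \sum_{p,q\geq0}\psi_{p,q}(\gamma)\,z^pw^q ,
\]
which already accounts for the numerator $1-zw$.

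It remains to evaluate $\Psi_\gamma$. Here I would diagonalize $\gamma=\diag(\lambda_1,\dots,\lambda_n)$ with each $|\lambda_i|=1$; then the coordinate functions satisfy $\gamma\cdot z_i=\lambda_i^{-1}z_i$ and $\gamma\cdot\bar z_i=\lambda_i\bar z_i$ (using $\bar\lambda_i^{-1}=\lambda_i$), so a monomial $z_1^{a_1}\cdots z_n^{a_n}\bar z_1^{b_1}\cdots\bar z_n^{b_n}$ is an eigenvector with eigenvalue $\prod_i\lambda_i^{b_i-a_i}$. Thus $\psi_{p,q}(\gamma)=\sum_{|a|=p,\,|b|=q}\prod_i\lambda_i^{b_i-a_i}$, and $\Psi_\gamma$ factors as a product of geometric series,
\[
\Psi_\gamma(z,w) = \prod_{i=1}^n\frac{1}{1-\lambda_i^{-1}z}\cdot\prod_{i=1}^n\frac{1}{1-\lambda_i w},
\]
an identity of formal power series in $z,w$. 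Finally I would rewrite each product as a determinant, factoring $1-\lambda_i^{-1}z=-\lambda_i^{-1}(z-\lambda_i)$ and $1-\lambda_i w=-\lambda_i(w-\bar\lambda_i)$; collecting the resulting factors $(-1)^n\det\gamma$ and $(-1)^n(\det\gamma)^{-1}$, which cancel, gives $\Psi_\gamma(z,w)=\dfrac{1}{\det(z-\gamma)\det(w-\bar\gamma)}$, where $\bar\lambda_i=\lambda_i^{-1}$ are recognized as the eigenvalues of $\bar\gamma$. Combining the three displays yields the claimed formula. I expect the only delicate point to be this last bookkeeping of signs and $\det\gamma$ factors; everything else is routine formal-series manipulation, once one notes that the $|z|^2$-decomposition \eqref{eq:Ppq=Hpq+|z|^2Pp-1q-1} is genuinely $\Gamma$-equivariant.
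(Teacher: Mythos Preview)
Your proposal is correct and follows essentially the same route as the paper: projection formula for $\dim\HH_{p,q}^\Gamma$, the character identity $\chi_{p,q}=\psi_{p,q}-\psi_{p-1,q-1}$ coming from \eqref{eq:Ppq=Hpq+|z|^2Pp-1q-1} to extract the factor $1-zw$, diagonalization of $\gamma$, and a geometric-series summation over monomials. The only difference is cosmetic: the paper writes the eigenvalues as $k$-th roots of unity $\xi_k^{\ell_i}$ (using $\gamma^{|\Gamma|}=I$) and silently passes from $\prod_i(1-\xi_k^{-\ell_i}z)(1-\xi_k^{\ell_i}w)$ to $\det(z-\gamma)\det(w-\bar\gamma)$, whereas you make the sign/$\det\gamma$ cancellation explicit.
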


\begin{proof}
Write $k=|\Gamma|$. 
It is well known that the linear operator $\Psi_\Gamma:\PP_{p,q}\to \PP_{p,q}$ given by $\Psi_\Gamma(P)=\frac1k \sum_{\gamma\in\Gamma} \gamma\cdot P$ preserves $\HH_{p,q}$, $\Phi_{\Gamma}(\HH_{p,q})= \HH_{p,q}^\Gamma$, and $\Psi_{\Gamma}|_{\HH_{p,q}^\Gamma}=\Id_{\HH_{p,q}^\Gamma}$, therefore $\dim \HH_{p,q}^\Gamma=\tr(\Psi_\Gamma)$. 
Let us denote by $\chi_{p,q}$ and $\widetilde\chi_{p,q}$ the characters of the $\Ut(n)$-modules $\HH_{p,q}$ and $\PP_{p,q}$ respectively. 
Thus, $\tr(\Psi_\Gamma)=\frac1k \sum_{\gamma\in\Gamma} \tr(P\mapsto \gamma\cdot P)= \frac1k \sum_{\gamma\in\Gamma} \chi_{p,q}(\gamma)$. 
One has that $\chi_{p,q}=\widetilde \chi_{p,q} - \widetilde \chi_{p-1,q-1}$ by \eqref{eq:Ppq=Hpq+|z|^2Pp-1q-1}, thus 
\begin{equation}\label{eq:F_Gamma-characters}
\begin{aligned}
F_{\Gamma}(z,w) &
=\sum_{p,q\geq0} \dim \HH_{p,q}^\Gamma z^pw^q
=\frac1k \sum_{p,q\geq0} \sum_{\gamma\in\Gamma} \chi_{p,q}(\gamma) z^pw^q
\\ & 
=\frac1k \sum_{\gamma\in\Gamma} \sum_{p,q\geq0}  \big(\widetilde \chi_{p,q}(\gamma) -\widetilde \chi_{p-1,q-1}(\gamma)\big) z^pw^q
\\ & 
=\frac1k \sum_{\gamma\in\Gamma} \left(
	\sum_{p,q\geq0}  \widetilde \chi_{p,q}(\gamma) z^pw^q
	-
	zw\sum_{p,q\geq0} \widetilde \chi_{p,q}(\gamma) z^{p}w^{q}
\right)
\\ & 
=\frac{1-zw}k \sum_{\gamma\in\Gamma} 
	\sum_{p,q\geq0}  \widetilde \chi_{p,q}(\gamma) z^pw^q
.
\end{aligned}
\end{equation}

Let $\gamma\in\Gamma$. 
Since $\gamma^k=I$, the eigenvalues of $\gamma$ are of the form $\xi_k^{\ell_1},\dots, \xi_k^{\ell_n}$ with $\ell_i\in\Z$ for all $i$, where $\xi_k=e^{\frac{2\pi\mi}{k}}$.
Furthermore, $\chi_{p,q}(\gamma)=\chi_{p,q}(\gamma')$ with $\gamma'=\diag(\xi_k^{\ell_1}, \dots, \xi_k^{\ell_n})$. 
A basis of $\PP_{p,q}$ is given by $\{\prod_{i=1}^n z_i^{\alpha_i}\bar z_i^{\beta_i} :  \alpha_i,  \beta_i\in\N_0\;\forall i,\; \alpha_1+\dots+\alpha_n=p,\; \beta_1+\dots+\beta_n=q\}$  and
\begin{equation*}
\gamma'\cdot \prod_{i=1}^n z_i^{\alpha_i}\bar z_i^{\beta_i} 
= \prod_{i=1}^n (\xi_k^{-\ell_i} z_i)^{\alpha_i} (\overline{\xi_k^{-\ell_i}z_i})^{\beta_i}
= \xi_k^{-\sum_{i=1}^n \ell_i(\alpha_i-\beta_i)} \prod_{i=1}^n z_i^{\alpha_i} \bar z_i^{\beta_i}.
\end{equation*}
Hence  
\begin{equation*}
\begin{aligned}
\sum_{p,q\geq0}\widetilde \chi_{p,q}(\gamma) z^p w^q&
= \sum_{p,q\geq0}\widetilde \chi_{p,q}(\gamma') z^p w^q 
= \sum_{p,q\geq0} \sum_{\substack{\alpha_1+\dots+a_n=p\\ \beta_1+\dots+\beta_n=q}} \xi_k^{-\sum_i \ell_i(\alpha_i-\beta_i)} z^p w^q
\\ & 
= \sum_{\alpha\in\N_0^n} \sum_{\beta\in\N_0^n} \xi_k^{-\sum_i \ell_i \alpha_i} z^{\alpha_1+\dots+\alpha_n} \xi_k^{\sum_i \ell_i\beta_i} w^{\beta_1+\dots+\beta_n}
\\ & 
= \sum_{\alpha\in\N_0^n} \sum_{\beta\in\N_0^n} \prod_{i=1}^n (\xi_k^{-\ell_i} z)^{\alpha_i} (\xi_k^{\ell_i} w)^{\beta_i}
%
%
= \prod_{i=1}^n \left(\sum_{\alpha_i\in\N_0}(\xi_k^{-\ell_i} z)^{\alpha_i}\right) \left( \sum_{\beta_i\in\N_0}   (\xi_k^{\ell_i} w)^{\beta_i}\right)
\\ & 
= \prod_{i=1}^n \frac{1}{(1-\xi_k^{-\ell_i}z) (1-\xi_k^{\ell_i}w)}
%
%
= \frac{1}{ \det(z-\gamma) \det(w-\bar\gamma)}
,
\end{aligned}
\end{equation*}
which combined with \eqref{eq:F_Gamma-characters} concludes the proof.  
\end{proof}

\begin{remark}
Instead of $F_{\Gamma}(z,w)$, one can associated to an elliptic CR manifold $S^{2n-1}/\Gamma$ 
the \emph{authentical} generating function 
\begin{equation*}
G_\Gamma(z)
=\sum_{r\geq n} \op{mult}_{\Gamma}(2r)\, z^r
= \sum_{r\geq n} z^r \sum_{ \substack{p,q\geq0 \, : \\ 2r=2q(p+n-1)}}  \dim\HH_{p,q}^\Gamma 
,
\end{equation*}
where again $z$ is a formal variable. 
Recall from \eqref{eq:mult_Gamma(2r)} that $\op{mult}_{\Gamma}(2r)$ denotes the multiplicity of $2r$ in the spectrum of the Kohn Laplacian $\square_{b,\Gamma}$ on $S^{2n-1}/\Gamma$. 
Consequently, unlike $F_{\Gamma}(z,w)$, the generating function $G_{\Gamma}(z)$ actually encodes $\Spec(\square_{b,\Gamma})$ in the sense that, given two elliptic CR manifolds $S^{2n-1}/\Gamma$ and $S^{2n-1}/\Gamma'$, one has that
\begin{equation*}
\Spec(\square_{b,\Gamma})=\Spec(\square_{b,\Gamma'}) 
\quad\Longleftrightarrow \quad 
G_{\Gamma}(z)=G_{\Gamma'}(z). 
\end{equation*} 
However, the authors were not able to find a nice expression for $G_{\Gamma}(z)$ like in Theorem~\ref{thm:F_Gamma(z,w)}, 
so the generating function $G_\Gamma(z)$ has no applications so far. 
\end{remark}

It follows from Theorem~\ref{thm:F_Gamma(z,w)} that the generating function $F_{L(k;s)}(z,w):= F_{\Gamma_{k,s}}(z,w)$ for a CR lens space $L(k;s)$ is given by 
\begin{equation}\label{eq:F_L(z,w)}
F_{L(k;s)}(z,w) =\frac{1}{k} \sum_{m=0}^{k-1} \frac{1-zw}{\prod_{i=1}^{n} (z-\xi_k^{-s_im})(w-\xi_k^{s_im})}.
\end{equation}
This expression was established in \cite[Thm.~4.10]{Yunus-cia} and will be the main tool in the three constructions of CR isospectral CR lens spaces of the next sections. 
 
We end this section with a connection between CR isospectral elliptic CR manifolds via Proposition~\ref{prop:F_Gamma=>CR-isosp} and isospectral spherical space forms with respect to the Laplace-Beltrami operator. 

\begin{remark}\label{rem:H_lH_pq}
Similarly as in  
\eqref{eq:L^2(S^2n-1)}, it is well known that $L^2(S^{2n-1})\simeq \bigoplus_{l\geq0}\widetilde \HH_l$ as $\SO(2n)$-modules, where $\widetilde \HH_l$ stands for the space of harmonic homogeneous polynomials in the variables $x_i$ for $i=1,\dots,2n$ of degree $l$, which is irreducible as a representation of $\SO(2n)$, and is precisely the eigenspace of the Laplace-Beltrami operator on the round sphere $S^{2n-1}$ with eigenvalue $k(k+2n-2)$. 
Moreover, for $\Gamma$ a finite subgroup of $\SO(2n)$ acting freely on $S^{2n-1}$ (which can be assumed inside $\Ut(n)$), the spectrum of the Laplace-Beltrami operator associated to the round metric on $S^{2n-1}/\Gamma$ is determined as follows: 
the multiplicity of $k(k+2n-2)$ is given by $\dim\widetilde\HH_l^\Gamma$.

By identifying the variables $z_j=x_{2j-1}+\mi x_{2j}$ and $\bar z_j=x_{2j-1}-\mi x_{2j}$ for all $j=1,\dots,n$, we obtain the following isomorphism of $\Ut(n)$-modules: 
\begin{equation*}
\widetilde \HH_l\simeq \bigoplus_{p,q\geq0\,: \, l=p+q} \HH_{p,q}. 
\end{equation*}
Consequently, $\dim\widetilde \HH_l^\Gamma = \sum\limits_{p=0}^l \dim \HH_{p,l-p}^\Gamma. 
$
\end{remark}

\begin{theorem}\label{thm:F_Gamma=>Riem-isosp}
Let $\Gamma,\Gamma'$ be finite subgroups of $\Ut(n)$ acting freely on $S^{2n-1}$. 
If $F_{\Gamma}(z,w) = F_{\Gamma'}(z,w)$, then  the spherical space forms $S^{2n-1}/\Gamma$ and $S^{2n-1}/\Gamma'$ are isospectral with respect to the Laplace-Beltrami operator.
\end{theorem}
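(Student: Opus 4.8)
The plan is to pass from the equality of the two-variable generating functions to equality of the Laplace-Beltrami multiplicities by summing along anti-diagonals, exactly as recorded in Remark~\ref{rem:H_lH_pq}. First I would observe that, since $F_{\Gamma}(z,w)=\sum_{p,q\geq0}\dim\HH_{p,q}^{\Gamma}\,z^pw^q$ as in \eqref{eq:F}, comparing the coefficients of $z^pw^q$ on both sides shows that the hypothesis $F_{\Gamma}(z,w)=F_{\Gamma'}(z,w)$ is equivalent to
\begin{equation*}
\dim\HH_{p,q}^{\Gamma}=\dim\HH_{p,q}^{\Gamma'}\qquad\text{for all }p,q\geq0.
\end{equation*}

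Next I would fix $l\geq0$ and add these equalities over the anti-diagonal $p+q=l$. By the last displayed identity of Remark~\ref{rem:H_lH_pq}, namely $\dim\widetilde\HH_l^{\Gamma}=\sum_{p=0}^l\dim\HH_{p,l-p}^{\Gamma}$, this yields
\begin{equation*}
\dim\widetilde\HH_l^{\Gamma}=\dim\widetilde\HH_l^{\Gamma'}\qquad\text{for all }l\geq0.
\end{equation*}

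Finally, Remark~\ref{rem:H_lH_pq} also records that the Laplace-Beltrami operator on the round quotient $S^{2n-1}/\Gamma$ has eigenvalues $l(l+2n-2)$ with multiplicity $\dim\widetilde\HH_l^{\Gamma}$. Since the map $l\mapsto l(l+2n-2)$ is strictly increasing on $\N_0$, distinct values of $l$ produce distinct eigenvalues, so no two of these multiplicities are merged and the full multiplicity attached to the eigenvalue $l(l+2n-2)$ equals exactly $\dim\widetilde\HH_l^{\Gamma}$. Combining this with the equality of all these dimensions shows that $S^{2n-1}/\Gamma$ and $S^{2n-1}/\Gamma'$ have identical Laplace-Beltrami spectra, as claimed. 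I do not anticipate a genuine obstacle here: the argument is essentially the anti-diagonal refinement furnished by Remark~\ref{rem:H_lH_pq}, and the only point deserving a moment's care is the injectivity of $l\mapsto l(l+2n-2)$, which ensures that the CR-side dimension data reassembles into the Riemannian multiplicities without collision.
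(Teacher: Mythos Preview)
Your proof is correct and follows essentially the same route as the paper: both pass from $F_{\Gamma}=F_{\Gamma'}$ to equality of the anti-diagonal sums $\dim\widetilde\HH_l^{\Gamma}$ via Remark~\ref{rem:H_lH_pq}. The only cosmetic difference is that the paper packages this step as the specialization $F_{\Gamma}(z,z)=\sum_{l\geq0}\dim\widetilde\HH_l^{\Gamma}\,z^l$ and then invokes Ikeda's result that equality of these one-variable generating functions is equivalent to Laplace--Beltrami isospectrality, whereas you argue the last implication directly from the injectivity of $l\mapsto l(l+2n-2)$.
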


\begin{proof}
It follows immediately from Remark~\ref{rem:H_lH_pq} that
\begin{equation*}
F_{\Gamma}(z,z)
=\sum_{l \geq0} \left(\sum_{p,q\geq0\,: \, l=p+q} \dim\HH_{p,q}^\Gamma \right) z^{l}
=\sum_{l \geq0} \dim\widetilde\HH_l^\Gamma\, z^{l}
. 
\end{equation*}
The expression on the right-hand side above is precisely Ikeda's generating function associated to the spherical space form $S^{2n-1}/\Gamma$.
The claim follows since Ikeda proved in \cite[Prop.~2.1]{Ikeda80_3-dimI} that two spherical space forms are isospectral if and only if their generating functions coincide. 
\end{proof}

\section{Arbitrarily large mutually CR isospectral family} \label{sec:ikeda}
In this section we construct for each odd prime number $k$, a pairwise CR isospectral family of CR lens spaces with fundamental group of order $k$.
The size of such family is arbitrarily large when $k$ increases. 
The method is a slight variation of Ikeda's construction in \cite{Ikeda80_isosp-lens}. 
At the end, we show, explicitly in a table, the examples for small values of $k$.

For $k,n\in\mathbb N$ with $n\geq2$ and $k\geq3$, we set 
\begin{equation}\label{eq:GersonFamily}
\GG(n,k) = \left\{L(k;s) :  
\begin{array}{l}
	\gcd(k,s_i)=1\;\forall i, \\ 
	s_i\not\equiv s_j\pmod k\quad\forall i\neq j,
	\\
	\exists\, i\; \text{ such that  } s_i\not\equiv -s_j\pmod k \quad \forall j
\end{array}
\right\}.
\end{equation}
Clearly, $\GG(n,k)$ is empty if $n$ is greater than the size of $\mathbb Z_{k}^\times:=(\Z/k\Z)^\times$. 
Moreover, one has $\,\GG(n,k) \neq \emptyset \iff n<\varphi(k).$

\begin{theorem}\label{thm:Gerson}
If $k$ is an odd prime number, then the CR lens spaces in $\GG(k-3,k)$ are mutually CR isospectral. 
\end{theorem}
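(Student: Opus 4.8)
The plan is to reduce everything to an identity of generating functions and invoke Proposition~\ref{prop:F_Gamma=>CR-isosp}: it suffices to show that $F_{L(k;s)}(z,w)$ takes the same value for every $L(k;s)\in\GG(k-3,k)$. I will work directly with the rational expression \eqref{eq:F_L(z,w)}, and the main idea is to pass to the (two-element) complement of the exponent set.

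First I would record the combinatorial meaning of membership in $\GG(k-3,k)$. Since $k$ is prime, $\gcd(k,s_i)=1$ just says $s_i\not\equiv 0$, and together with $s_i\not\equiv s_j$ this makes $S=\{s_1,\dots,s_{k-3}\}$ a $(k-3)$-element subset of $\{1,\dots,k-1\}$; hence its complement $S^c=\{a,b\}$ has exactly two elements, with $a\neq b$ and $a,b\not\equiv 0$. A short check shows that the third defining condition of $\GG(k-3,k)$ --- that some $s_i$ is not the negative of any $s_j$ --- is equivalent to $a+b\not\equiv 0\pmod k$. Since \eqref{eq:F_L(z,w)} is symmetric in the $s_i$, the value $F_{L(k;s)}(z,w)$ depends only on the set $S$, equivalently on the pair $\{a,b\}$ of nonzero residues with $a\neq b$ and $a+b\not\equiv 0$.

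Next I would rewrite each summand of \eqref{eq:F_L(z,w)}. Put $\Phi(z)=\prod_{t=1}^{k-1}(z-\xi_k^{-t})=1+z+\dots+z^{k-1}$ and likewise $\Phi(w)=\prod_{t=1}^{k-1}(w-\xi_k^{t})$. For $m\neq 0$ the map $t\mapsto mt$ permutes $\{1,\dots,k-1\}$, so $\{s_im\bmod k\}$ is exactly the complement of $\{ma,mb\}$; hence
\begin{equation*}
\prod_{i=1}^{k-3}(z-\xi_k^{-s_im})=\frac{\Phi(z)}{(z-\xi_k^{-ma})(z-\xi_k^{-mb})},
\end{equation*}
and similarly for the $w$-factor. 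Therefore the terms with $m\neq 0$ sum to $\frac{1-zw}{\Phi(z)\Phi(w)}\,\Sigma$, where
\begin{equation*}
\Sigma=\sum_{m=1}^{k-1}(z-\xi_k^{-ma})(z-\xi_k^{-mb})(w-\xi_k^{ma})(w-\xi_k^{mb}),
\end{equation*}
while the $m=0$ term equals $\frac{1-zw}{(z-1)^{k-3}(w-1)^{k-3}}$ and depends only on $k$.

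The heart of the argument is that $\Sigma$ is independent of the choice of $\{a,b\}$. I would expand the product and collect monomials in $z,w$; each coefficient is a finite sum over $m=1,\dots,k-1$ of terms $\xi_k^{mc}$, where $c$ ranges over $0$ and the off-diagonal exponents $\pm a,\pm b,\pm(a-b),\pm(a+b)$, and $\sum_{m=1}^{k-1}\xi_k^{mc}$ equals $k-1$ when $c\equiv 0$ and $-1$ otherwise. The three defining conditions guarantee exactly that these off-diagonal exponents never vanish --- $a,b\not\equiv 0$ comes from $\gcd(k,s_i)=1$, $a-b\not\equiv 0$ from $s_i\not\equiv s_j$, and $a+b\not\equiv 0$ from the third condition --- so every coefficient takes its generic value and $\Sigma$ reduces to a fixed polynomial in $z,w$ depending only on $k$. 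Consequently $F_{L(k;s)}(z,w)$ is constant on $\GG(k-3,k)$, and Proposition~\ref{prop:F_Gamma=>CR-isosp} concludes the proof. The main obstacle I anticipate is purely organizational: performing the expansion of $\Sigma$ carefully and confirming that $a+b\not\equiv 0$ is precisely what forces all the off-diagonal root-of-unity sums to equal $-1$ uniformly across the whole family.
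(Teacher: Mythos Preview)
Your proposal is correct and follows essentially the same route as the paper's proof: both separate the $m=0$ term, pass to the two-element complement $\{a,b\}$ (the paper calls it $\{u,v\}$), rewrite the remaining summands via the cyclotomic polynomial, and then use the identity $\sum_{m=1}^{k-1}\xi_k^{mc}=-1$ for $c\not\equiv 0$ to see that the resulting polynomial is independent of the parameters. Your observation that the third condition in \eqref{eq:GersonFamily} is equivalent to $a+b\not\equiv 0\pmod k$ is exactly what the paper uses (phrased there as $u\not\equiv -v$), and your list of off-diagonal exponents $\pm a,\pm b,\pm(a-b),\pm(a+b)$ matches the explicit expansion carried out in \eqref{eq:prod}.
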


\begin{proof}
Write $n=k-3$.
We want to show that $F_{L}(z,w)=F_{L'}(z,w)$ for all $L,L'\in\GG(n,k)$, which is sufficient by Proposition~\ref{prop:F_Gamma=>CR-isosp}.
The strategy is to show that $F_{L(q;s)}(z,w)$ does not depend on $s=(s_1,\dots,s_n)$ if $L(q;s)\in\GG(n,k)$. 

We fix $L:=L(k;s)\in\GG(n,k)$. 
Since $s_i\not\equiv s_j\pmod k$ for all $i\neq j$ and the number of elements in $\mathbb Z_{k}^\times:=(\Z/k\Z)^\times$ is $k-1=n+2$ because $k$ is prime, there are $u,v\in\Z$ prime to $k$ such that $\{s_1,\dots,s_n,u,v\}$ is a representative set of $\mathbb Z_{k}^\times$.
In other words, for any $a\in\Z$ prime to $k$, there is a unique element $t$ in this set such that $a\equiv t\pmod k$. 
Furthermore, $u\not\equiv -v\pmod k$ by the third condition in \eqref{eq:GersonFamily}.

Let us denote by $\Phi_l$ the $l$th cyclotomic polynomial. 
Notice $\Phi_k(z)=\prod_{i=1}^{n} (z-\xi_k^{i})$ since $k$ is a prime integer; recall the notation $\xi_k=e^{\frac{2\pi\mi}{k}}$. 
From \eqref{eq:F_L(z,w)}, we have that
\begin{equation}\label{eq:F_Lcyclotomic}
\begin{aligned}
F_{L}(z,w) &
= \frac{1-zw}{k} \left(
	\frac{1}{(z-1)^n(w-1)^n}
	+\frac{1}{\Phi_k(z)\Phi_k(w)} {\sum_{m=1}^{k-1} }
		\frac{\Phi_k(z)}{\prod_{i=1}^{n} (z-\xi_k^{-ms_i})} 
		\frac{\Phi_k(w)}{\prod_{i=1}^{n} (w-\xi_k^{ms_i})}
\right)
.
\end{aligned}
\end{equation}
Note that $\{\xi_k^{ms_i}:  1\leq i\leq n\}\cup \{\xi_k^{mu}, \xi_k^{mv}\}$ is the set of $k$-th roots of unity for any $m=1,\dots,k-1$, thus
\begin{multline}\label{eq:prod}
\sum_{m=1}^{k-1} 
\frac{\Phi_k(z)}{\prod_{i=1}^{n} (z-\xi_k^{-ms_i})} \frac{\Phi_k(w)}{\prod_{i=1}^{n} (w-\xi_k^{ms_i})} 
= \sum_{m=1}^{k-1}  (z-\xi_k^{-mu})(z-\xi_k^{-mv}) (w-\xi_k^{mu})(w-\xi_k^{mv})
\\ 
=\sum_{m=1}^{k-1} \Big(
z^2w^2
- zw^2 (\xi_k^{-mu}+\xi_k^{-mv})
- z^2w (\xi_k^{mu}+\xi_k^{mv})
+z^2\xi_k^{m(u+v)} +w^2\xi_k^{-m(u+v)}
\\
+zw\big(2+\xi_k^{-m(u-v)}+\xi_k^{m(u-v)}\big)
-z\big( \xi_k^{mu}+\xi_k^{mv}\big)
-w\big( \xi_k^{-mu}+\xi_k^{-mv}\big)
+1\Big)
\\
=(k-1)z^2w^2
+2 zw^2
+2 z^2w
-z^2 -w^2
+(2k-4)zw 
+2z
+2w
+(k-1)
.
\end{multline}
In the last row we have used the classical identity 
\begin{equation}
\sum_{m=0}^{l-1}\xi_l^{mt}=
\begin{cases}
l&\text{ if }l\text{ divides } t,\\
0&\text{ otherwise}.
\end{cases}
\end{equation}
In particular, $\sum_{m=1}^{k-1} \xi_k^{\pm m(u-v)}=-1$ since $k\nmid u-v$. 

Now, writing by $p(z)$ the resulting polynomial in \eqref{eq:prod} and combining it with \eqref{eq:F_Lcyclotomic}, we have that 
\begin{equation}
F_{L}(z,w)
= \frac{1-zw}{k} \left(
	\frac{1}{(z-1)^n(w-1)^n}
	+\frac{p(z)}{\Phi_k(z)\Phi_k(w)}
\right),
\end{equation}
which does not depend on $s_1,\dots,s_n$ as asserted. 
\end{proof}

For $k$ an odd prime number, there are several pairs of CR lens spaces in $\GG(k-3,k)$ that are indeed equivalent. 
The next goal is to show that the number of equivalence classes of CR lens spaces in $\GG(k-3,k)$ has a cardinality going to $\infty$ as $k\to\infty$.

\begin{proposition}\label{prop:arbitraryliylarge}
If $k$ is an odd prime number, then 
there are exactly $\frac{k-3}2$ equivalence classes of CR lens spaces in $\GG(k-3,k)$. 
Hence, there are $\frac{k-3}2$ mutually CR isospectral not equivalent CR lens spaces in $\GG(k-3,k)$. 
\end{proposition}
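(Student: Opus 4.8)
The plan is to reduce the statement to counting the orbits of a free group action. Write $n=k-3$. Since $k$ is prime, the conditions $\gcd(k,s_i)=1$ and $s_i\not\equiv s_j\pmod k$ defining $\GG(n,k)$ in \eqref{eq:GersonFamily} simply mean that the reductions $\bar s_1,\dots,\bar s_n$ form a subset $S\subseteq\mathbb{Z}_k^\times$ of cardinality $n=(k-1)-2$; equivalently, $S$ is the complement in $\mathbb{Z}_k^\times$ of a two-element set $\{u,v\}$. The first step is to reinterpret the third condition. As $k$ is an odd prime, the map $x\mapsto -x$ is a fixed-point-free involution on $\mathbb{Z}_k^\times$, partitioning it into $\frac{k-1}{2}$ pairs $\{x,-x\}$. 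The third condition in \eqref{eq:GersonFamily} asserts that some $\bar s_i$ has $-\bar s_i\notin S$, i.e.\ that $S$ is \emph{not} invariant under negation; passing to complements, this holds precisely when $\{u,v\}$ is not one of the negation pairs, that is, when $u\not\equiv -v\pmod k$. Thus the lens spaces in $\GG(k-3,k)$ correspond to subsets $S$ of size $k-3$, or equivalently to their complementary pairs $\{u,v\}$ with $u\neq v$ and $u\not\equiv-v$.

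Next I would describe the CR-equivalence classes. By Proposition~\ref{prop:CR-isometria}, $L(k;s)$ and $L(k;s')$ are equivalent if and only if $S'=cS$ for some $c\in\mathbb{Z}_k^\times$ (the permutation $\sigma$ is absorbed because the entries are distinct). Under the complement correspondence, scaling $S$ by $c$ sends $\{u,v\}$ to $\{cu,cv\}$, so the equivalence classes of CR lens spaces in $\GG(k-3,k)$ are exactly the orbits of the multiplicative action of $\mathbb{Z}_k^\times$ on the set $X$ of admissible pairs $\{u,v\}$.

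The final step is the orbit count. Discarding the $\frac{k-1}{2}$ negation pairs from the $\binom{k-1}{2}$ unordered pairs of distinct units gives $|X|=\binom{k-1}{2}-\frac{k-1}{2}=\frac{(k-1)(k-3)}{2}$. The crucial point is that the action of $\mathbb{Z}_k^\times$ on $X$ is \emph{free}: if $c\cdot\{u,v\}=\{u,v\}$, then either $c$ fixes each element, forcing $c=1$, or $c$ interchanges them, forcing $c^2=1$ and hence $c=-1$; but $c=-1$ would give $\{u,v\}=\{u,-u\}$, a negation pair excluded from $X$. Since $|\mathbb{Z}_k^\times|=k-1$, the number of orbits is $|X|/(k-1)=\frac{k-3}{2}$, proving the first assertion. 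The second assertion follows by selecting one representative lens space from each of these $\frac{k-3}{2}$ classes: they are pairwise non-equivalent by construction and mutually CR isospectral by Theorem~\ref{thm:Gerson}.

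The main (and only delicate) point is the freeness of the action, which is exactly where the hypothesis that $k$ is an odd prime is used: one needs $-1\neq 1$ in $\mathbb{Z}_k^\times$, the absence of negation fixed points, and the admissibility condition $u\not\equiv-v$ to exclude the transposition $c=-1$. Everything else is bookkeeping with the complement bijection and Proposition~\ref{prop:CR-isometria}.
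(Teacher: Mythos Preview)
Your proof is correct and follows essentially the same approach as the paper: both pass to the complementary pair $\{u,v\}\subset\mathbb{Z}_k^\times$, impose the condition $u\not\equiv -v$, and show that the scaling action of $\mathbb{Z}_k^\times$ is free on such pairs (the key point being that $c=-1$ is ruled out precisely by the non-opposite hypothesis). The only cosmetic difference is that the paper carries out the count with ordered tuples and then divides by $(k-3)!(k-1)$, whereas you work directly with unordered sets and divide by $k-1$; the substance is identical.
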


\begin{proof}
First we count the different members $L(k;s)$ in $\GG(k-3,k)$.
We have to choose $k-3$ different elements $s_i$ in $\mathbb{Z}_k^\times$ in such a way that the two unchosen elements
do not add up to $k \pmod k$, i.e., they are not opposite in $\mathbb{Z}_k^\times$. 
There are $\left(\binom{k-1}{2}-\frac{k-1}{2}\right)(k-3)!$ ways of doing this, since here we are taking into account the order of the elements $s_i$, and this is the number  $\#\GG(k-3,k)$.

Now, we note that if $L(k;s)$ belongs to $\GG(k-3,k)$, then either does the lens space obtained by multiplying the $s_i$'s by a scalar in $\mathbb{Z}_k^\times$, and then applying a permutation $\sigma$ to the indices.
Furthermore, all the lens spaces obtained in this way will be different to each other, since  
we can reduce this computation to the case when the two lens spaces are obtained from a fixed $L(k;s)$, the first one by permuting the elements $s_i$ with $\sigma\in S_{k-3}$, and the second one by multiplying all the $s_i$ by a fixed $c\in\mathbb{Z}_k^\times$. 
Now the first one has the same two unchosen elements as $L(k;s)$ had, so the same must happen with the second lens space. 
Hence multiplication by $c$ must take the two elements in $\mathbb{Z}_k^\times$ into the same set, 
and, by the assumption that these two elements are not opposite, it turns out that $c$ must be equal to $1$. 
This, in turn, implies that $\sigma$ is the identity. 

Thus, each class of equivalent CR lens spaces in $\GG(k-3,k)$
has exactly $(k-3)!(k-1)$ elements. Hence, the cardinality of $\GG(k-3,k)$, up to CR diffeomorphisms is
$$
\frac{(\binom{k-1}{2}-\frac{k-1}{2}) \, (k-3)!} {(k-3)!\, (k-1)}= \frac{k-3}2,
$$
as claimed.
\end{proof}

\begin{example}
The next table shows the CR isospectral family provided by Theorem~\ref{thm:Gerson}, up to equivalence, for small values of $k$. 
Note that for $k=5$, the family has a single element. 
\begin{equation}
\begin{array}{ccl}
k & n & \GG(n,k)/\text{equivalence} \\ \hline
\rule{0pt}{14pt}
5 & 2 & 
\left\{\begin{array}{l}
	L(5;1,2)
\end{array}\right\} 
\\
7 & 4 & 
\left\{\begin{array}{l}
	L(7;1,2,3,4), L(7;1,2,3,5)
\end{array}\right\} 
\\
11 & 8 & 
\left\{\begin{array}{l}
	L(11;1,2,3,4,5,6,7,8), L(11;1,2,3,4,5,6,7,9),\\ L(11;1,2,3,4,5,6,7,10), L(11;1,2,3,4,5,6,9,10)
\end{array}\right\} 
\\
13 & 10 & 
\left\{\begin{array}{l}
	L(13;1,2,3,4,5,6,7,8,9,10), 
	L(13;1,2,3,4,5,6,7,8,9,11),\\
	L(13;1,2,3,4,5,6,7,8,9,12),
	L(13;1,2,3,4,5,6,7,8,10,11), \\
	L(13;1,2,3,4,5,6,7,9,11,12)
\end{array}\right\} 
\end{array}
\end{equation}
\end{example}

\section{Finiteness condition and computational results} \label{sec:computational}
The main goal of this section is to find, with the help of a computer, CR isospectral pairs of $(2n-1)$-dimensional CR lens spaces with fundamental group of order $k$, for small values of $n$ and $k$. 
To do that, we first obtain a finiteness condition about whether two such CR lens spaces are CR isospectral.

Let $L=L(k;s)$ be a $(2n-1)$-dimensional CR lens space. 
By multiplying and dividing by $(z^k-1)^n(w^k-1)^n$ to $F_{L}(z,w)$, \eqref{eq:F_L(z,w)} gives 
\begin{equation}\label{eq:F_Lcomputacional}
F_{L}(z,w) 
=\frac{1}{k}\frac{1-zw}{(z^k-1)^n(w^k-1)^n} \sum_{m=0}^{k-1} 
\left(\prod_{i=1}^{n}
	\frac{z^k-1}{ z-\xi_k^{-s_im}}
	\frac{w^k-1}{w-\xi_k^{s_im}}
\right)
.
\end{equation}
On the one hand, the common factor depends only on $n$ and $k$, but not on $s=(s_1,\dots,s_n)$.
On the other hand, each term inside the sum is a polynomial on $z$ and $w$, and the whole sum is a polynomial of degree at most $n(k-1)$ in both variables that we denote by
\begin{equation}\label{eq:P_L(z,w)}
P_{L}(z,w) =\sum_{\alpha,\beta=0}^{n(k-1)} a_{\alpha,\beta}\, z^\alpha w^\beta, 
\quad \text{i.e.\ }
F_{L}(z,w) 
=\frac{1}{k}\frac{(1-zw)P_{L}(z,w)}{(z^k-1)^n(w^k-1)^n} .
\end{equation}   
Consequently, the spectrum of the Kohn Laplacian on $L(q;s)$ is determined by the finitely many coefficients $a_{\alpha,\beta}$ for $1\leq \alpha,\beta\leq n(k-1)$. 
We next show how to obtain the values of $\{a_{\alpha,\beta}\}_{\alpha,\beta}$ from finitely many coefficients of $F_{L}(z,w)$.

\begin{theorem}\label{thm:finitecondition}
Let $L=L(k;s)=S^{2n-1}/\Gamma$ and $L'=(k;s')=S^{2n-1}/\Gamma$ be lens spaces of dimension $2n-1$. 
If 
\begin{equation*}
\dim \HH_{p,q}^{\Gamma}=\dim \HH_{p,q}^{\Gamma'}
\qquad\text{for all \ $0\le p,q\le n(k-1)$}
,
\end{equation*}
then $F_{L}(z,w)=F_{L'}(z,w)$ and consequently the CR lens spaces $L$ and $L'$ are CR isospectral. 
\end{theorem}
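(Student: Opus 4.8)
The key observation is the one already recorded in \eqref{eq:P_L(z,w)}: the generating function factors as
\begin{equation*}
F_{L}(z,w)
=\frac{1}{k}\frac{(1-zw)\,P_{L}(z,w)}{(z^k-1)^n(w^k-1)^n},
\end{equation*}
where $P_{L}(z,w)=\sum_{\alpha,\beta=0}^{n(k-1)} a_{\alpha,\beta}\,z^\alpha w^\beta$ is a polynomial of degree at most $n(k-1)$ in each of $z$ and $w$, and the denominator depends only on $n$ and $k$. Since $F_{L}$ and $F_{L'}$ share the same denominator, the equality $F_{L}(z,w)=F_{L'}(z,w)$ is equivalent to the equality of the two numerators $(1-zw)P_{L}(z,w)=(1-zw)P_{L'}(z,w)$, hence to $P_{L}=P_{L'}$, i.e.\ to the finitely many equalities $a_{\alpha,\beta}=a'_{\alpha,\beta}$ for $0\le \alpha,\beta\le n(k-1)$. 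So the whole statement reduces to showing that these finitely many coefficients of $P_L$ are determined by the finitely many coefficients $\dim\HH^\Gamma_{p,q}$ for $0\le p,q\le n(k-1)$.

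First I would expand the factor $(1-zw)/\bigl((z^k-1)^n(w^k-1)^n\bigr)$ as a formal power series in $z,w$. Writing $\frac{1}{(1-z^k)^n}=\sum_{j\ge 0}\binom{n-1+j}{j}z^{kj}$ and likewise in $w$, one sees that this prefactor is an explicit power series whose lowest-degree term is a nonzero constant (up to the sign coming from $(z^k-1)^n=(-1)^n(1-z^k)^n$). Multiplying $P_L(z,w)$ by this series produces $k\,F_L(z,w)=\sum_{p,q}\dim\HH^\Gamma_{p,q}\,z^pw^q$. Comparing coefficients gives a system expressing each $\dim\HH^\Gamma_{p,q}$ as a finite $\Z$-linear combination of the $a_{\alpha,\beta}$, and conversely — because the prefactor has invertible (constant, nonzero) leading coefficient — the $a_{\alpha,\beta}$ can be recovered triangularly from the $\dim\HH^\Gamma_{p,q}$ with $p,q$ in the same range. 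Concretely, since $P_L=k\,F_L\cdot(z^k-1)^n(w^k-1)^n/(1-zw)$ has degree at most $n(k-1)$ in each variable, only the coefficients $\dim\HH^\Gamma_{p,q}$ with $0\le p,q\le n(k-1)$ enter the formula for any $a_{\alpha,\beta}$ in that range.

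Carrying this out, the hypothesis $\dim\HH^\Gamma_{p,q}=\dim\HH^{\Gamma'}_{p,q}$ for all $0\le p,q\le n(k-1)$ forces $a_{\alpha,\beta}=a'_{\alpha,\beta}$ for all $0\le\alpha,\beta\le n(k-1)$, hence $P_L=P_{L'}$, hence $F_L(z,w)=F_{L'}(z,w)$. The CR isospectrality of $L$ and $L'$ then follows from Proposition~\ref{prop:F_Gamma=>CR-isosp}.

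The main obstacle is bookkeeping rather than conceptual: one must check carefully that no coefficient $a_{\alpha,\beta}$ with $\alpha,\beta\le n(k-1)$ depends on any $\dim\HH^\Gamma_{p,q}$ with $p>n(k-1)$ or $q>n(k-1)$, so that the stated finite range of indices genuinely suffices. This is where the degree bound $\deg P_L\le n(k-1)$ in each variable, established when $P_L$ was defined in \eqref{eq:P_L(z,w)}, does the essential work: it guarantees that the recovery of $P_L$ from $F_L$ truncates at exactly the claimed range, so that knowing $F_L$ and $F_{L'}$ agree up to bidegree $(n(k-1),n(k-1))$ already forces them to agree as rational functions.
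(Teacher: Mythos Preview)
Your proposal is correct and follows essentially the same route as the paper: both use the factorization \eqref{eq:P_L(z,w)} to reduce $F_L=F_{L'}$ to $P_L=P_{L'}$ and then argue that the finitely many coefficients $a_{\alpha,\beta}$ (with $0\le\alpha,\beta\le n(k-1)$) are determined by the coefficients $\dim\HH_{p,q}^\Gamma$ in the same range. The only difference is presentational: the paper makes the recovery explicit by comparing coefficients in the polynomial identity $k(z^k-1)^n(w^k-1)^n F_L=(1-zw)P_L$ and writing down the resulting recursion for $a_{\alpha,\beta}$, whereas you phrase the same thing as inverting a unit power series and invoking triangularity; your last paragraph could be sharpened by noting that the needed bound $p\le\alpha,\ q\le\beta$ comes from the fact that the inverse prefactor $(z^k-1)^n(w^k-1)^n/(1-zw)$ is itself a genuine power series, not merely from the degree bound on $P_L$.
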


\begin{proof} 
The strategy of the proof is to expand both sides of 
\begin{equation}\label{eq:LHS-RHS}
k(z^k-1)^n(w^k-1)^n F_L(z,w) =(1-zw)P_L(z,w).
\end{equation} 
On the one hand, by expanding the two binomials, the left-hand side of \eqref{eq:LHS-RHS} equals
\begin{multline*}
k\sum_{p,q\geq 0} \sum_{i=0}^n \sum_{j=0}^n  (-1)^i(-1)^j  \binom {n}{i} \binom {n}{j} \dim\HH_{p,q}^\Gamma z^{p+ ik} w^{q+ jk} 
\\
= k\sum_{\alpha,\beta\geq0}
	\sum_{i=0}^{\lfloor \alpha/k\rfloor}\sum_{j=0}^{\lfloor \beta/k\rfloor}
	(-1)^{i+j} \binom {n}{i} \binom {n}{j} \dim\HH_{\alpha-ik,\beta-jk}^\Gamma
	\; z^\alpha w^\beta. 
\end{multline*}
On the other hand, standard computations show that the right-hand side of \eqref{eq:LHS-RHS} equals
\begin{multline*}
\sum_{\alpha,\beta \geq0}^{n(k-1)} a_{\alpha,\beta} z^\alpha w^\beta - \sum_{\alpha,\beta \geq0}^{n(k-1)} a_{\alpha,\beta} z^{\alpha +1} w^{\beta +1} 
\\
= a_{0,0} 
+ \sum_{\beta \geq1}^{n(k-1)} a_{0,\beta} w^\beta 
+ \sum_{\alpha\geq1}^{n(k-1)} a_{\alpha,0} z^\alpha 
+\sum_{\alpha,\beta \geq1}^{n(k-1)} \big( a_{\alpha,\beta} - a_{\alpha-1,\beta-1} \big) z^\alpha w^\beta 
+\text{other terms}
,
\end{multline*}
where `other terms' belongs to the span of $z^{\alpha}w^{\beta}$ with $\alpha>n(k-1)$ or $\beta>n(k-1)$.

Combining the expressions on both sides, it is clear that we can obtain the coefficients of the polynomial $P_{L}(z,w)$ in terms of $\{\dim \HH_{p,q}^\Gamma: 0\leq p,q,\leq n(k-1)\}$. 
Indeed, for any $0\leq \alpha,\beta\leq n(k-1)$, 
\begin{equation}
a_{\alpha,\beta} =
\begin{cases}
k \sum_{i=0}^{\lfloor \alpha/k\rfloor}	(-1)^{i} \binom {n}{i} \dim\HH_{\alpha-ik,0}^\Gamma 
	&\text{if }\beta=0,\\
k \sum_{j=0}^{\lfloor \beta/k\rfloor}	(-1)^{j} \binom {n}{j} \dim\HH_{0,\beta-jk}^\Gamma 
	&\text{if }\alpha=0,\\
a_{\alpha-1,\beta-1} 
+k \sum_{i=0}^{\lfloor \alpha/k\rfloor}\sum_{j=0}^{\lfloor \beta/k\rfloor}
	(-1)^{i+j} \binom {n}{i} \binom {n}{j} \dim\HH_{\alpha-ik,\beta-jk}^\Gamma
	&\text{if }\alpha\beta>0,
\end{cases}
\end{equation}
and an adequate recursion gives all the values of $a_{\alpha,\beta}$. 

We have shown so far that the values $\{\dim\HH_{p,q}: 0\leq p,q\leq n(k-1)\}$ determine $P_{L}(z,w)$, and consequently also $F_{L}(z,w)$ by \eqref{eq:P_L(z,w)}. 
We conclude that $\dim \HH_{p,q}^{\Gamma}=\dim \HH_{p,q}^{\Gamma'}$ for all $0\le p,q\le n(k-1)+1$ implies $F_{L}(z,w)=F_{L'}(z,w)$ and consequently the CR lens spaces $L$ and $L'$ are CR isospectral.
\end{proof}

\begin{remark}
Theorem~\ref{thm:finitecondition} does not answer the interesting question of whether \emph{a finite part of the spectrum of the Kohn Laplacian $\square_{b,L}$ on the CR lens space $L$ determines the whole spectrum of $\square_{b,L}$}.  
See \cite[Cor.5.2]{LauretLinowitz} for a related result valid for spherical space forms.
\end{remark}

This finiteness condition in Theorem~\ref{thm:finitecondition} allows us, with the help of a computer, to determine all the
pairs $L,L'$ of non-equivalent 
$(2n-1)$-dimensional CR lens spaces with fundamental group of order $k$ satisfying $F_L(z,w)=F_{L'}(z,w)$ (so $L$ and $L'$ are CR isospectral), at least for small values of $n$ and $k$. 

Table~\ref{table:isosp} shows all CR isospectral families for some low values of $n$ and $k$. 
It turned out that there is no any pair of $3$-dimensional (non-equivalent) CR isospectral CR lens spaces with fundamental groups of order $\leq 500$. 

\begin{problem}
Prove (or disprove) that two CR isospectral CR lens spaces of dimension $3$ are equivalent. 
\end{problem}

Fan, Kim, Plzak, Shors, Sottile and Zeytuncu~\cite[Thm.~2.4]{Yunus-cia} solved the above problem when the order of the fundamental $k$ is a prime number. 
In the Riemannian setting, Ikeda and Yamamoto~\cite{IkedaYamamoto79} established the analogous result, and later Yamamoto~\cite{Yamamoto80} extended it to an arbitrary $k$. 

\begin{table}
$
\begin{array}[t]{ccl}
2n-1 & k & \text{family} \\ \hline \rule{0pt}{14pt}
5 & 49 & [1, 8, 22], [1, 8, 36] \\
5 & 64 & [1, 9, 25], [1, 9, 49] \\
5 & 98 & [1, 15, 43], [1, 15, 71] \\
5 & 100 & [1, 11, 31], [1, 11, 81] \\
5 & 100 & [1, 11, 41], [1, 11, 71] \\
5 & 121 & [1, 12, 34], [1, 12, 100] \\
5 & 121 & [1, 12, 45], [1, 12, 89] \\
5 & 121 & [1, 12, 56], [1, 12, 78] \\
5 & 121 & [1, 23, 56], [1, 23, 89] \\
5 & 121 & [1, 23, 67], [1, 23, 78] \\
7 & 7 & [1, 2, 3, 4], [1, 2, 3, 5] \\
7 & 14 & [1, 3, 5, 9], [1, 3, 5, 13] \\
7 & 21 & [1, 4, 10, 16], [1, 4, 10, 19] \\
7 & 28 & [1, 5, 9, 17], [1, 5, 9, 25] \\
7 & 32 & [1, 5, 9, 13], [1, 5, 9, 29] \\
7 & 35 & [1, 6, 11, 16], [1, 6, 11, 26] \\
7 & 42 & [1, 13, 19, 25], [1, 13, 19, 31] \\
7 & 49 & [1, 8, 15, 29], [1, 8, 15, 36] \\
7 & 56 & [1, 9, 17, 25], [1, 9, 17, 33] \\
7 & 63 & [1, 10, 19, 37], [1, 10, 19, 55] \\
7 & 70 & [1, 11, 31, 51], [1, 11, 31, 61] \\
7 & 72 & [1, 7, 13, 31], [1, 7, 13, 55] \\
7 & 72 & [1, 7, 25, 67], [1, 7, 31, 61] \\
7 & 77 & [1, 12, 23, 45], [1, 12, 23, 67] \\
7 & 81 & [1, 10, 19, 37], [1, 10, 19, 64] \\
7 & 81 & [1, 10, 19, 46], [1, 10, 19, 55]  \\
7 & 81 & [1, 10, 28, 46], [1, 10, 46, 64]  \\
7 & 84 & [1, 13, 25, 37], [1, 13, 25, 61]  \\
7 & 91 & [1, 27, 40, 53], [1, 27, 40, 66]  \\
7 & 96 & [1, 13, 25, 37], [1, 13, 25, 85]  \\
7 & 98 & [1, 15, 29, 57], [1, 15, 29, 71]  \\
7 & 100 & [1, 11, 21, 41], [1, 11, 21, 81] \\
7 & 100 & [1, 11, 31, 71], [1, 11, 41, 81] \\
\end{array}
\quad 
\begin{array}[t]{ccl}
2n-1 & k & \text{family} \\ \hline \rule{0pt}{14pt}
9 & 16 & [1, 1, 3, 3, 9], [1, 1, 3, 3, 11] \\
9 & 16 & [1, 1, 5, 5, 9], [1, 1, 5, 5, 13] \\
9 & 16 & [1, 3, 5, 7, 9], [1, 3, 5, 7, 11] \\
9 & 16 & [1, 3, 5, 11, 15], [1, 3, 5, 13, 15] \\
9 & 20 & [1, 3, 7, 13, 19], [1, 3, 7, 17, 19] \\
9 & 27 & [1, 4, 7, 10, 16], [1, 4, 7, 10, 22] \\
9 & 27 & [1, 4, 7, 13, 19], [1, 4, 10, 22, 25] \\
9 & 27 & [1, 4, 7, 13, 25], [1, 4, 7, 19, 25] \\
9 & 32 & [1, 1, 7, 7, 17], [1, 1, 7, 7, 23]  \\
9 & 32 & [1, 1, 9, 9, 17], [1, 1, 9, 9, 25]  \\
9 & 32 & [1, 3, 9, 11, 17], [1, 3, 9, 11, 19] \\
9 & 32 & [1, 3, 9, 17, 19], [1, 3, 9, 19, 25] \\
9 & 32 & [1, 3, 5, 17, 25], [1, 3, 7, 11, 27]  \\
9 & 32 & [1, 5, 9, 13, 17], [1, 5, 9, 13, 21], \\
&  & [1, 5, 9, 13, 29], [1, 5, 9, 17, 29] \\
9 & 32 & [1, 5, 9, 17, 21], [1, 5, 9, 21, 25] \\
9 & 32 & [1, 7, 9, 15, 17], [1, 7, 9, 15, 23] \\
9 & 32 & [1, 7, 9, 23, 31], [1, 7, 9, 25, 31] \\
11 & 11 & [1, 2, 3, 4, 7, 9], [1, 2, 3, 5, 6, 10] \\
11 & 11 & [1, 2, 3, 5, 7, 8], [1, 2, 3, 4, 5, 9] \\
11 & 11 & [1, 2, 3, 4, 6, 7], [1, 2, 3, 4, 6, 10] \\
11 & 11 & [1, 2, 3, 4, 5, 8], [1, 2, 3, 4, 5, 10] \\
11 & 19 & [1, 4, 5, 6, 7, 11], [1, 4, 5, 6, 7, 17] \\
11 & 22 & [1, 3, 5, 7, 9, 13], [1, 3, 5, 7, 9, 17] \\
11 & 22 & [1, 3, 5, 7, 9, 15], [1, 3, 5, 7, 13, 19] \\
11 & 22 & [1, 3, 5, 7, 13, 17], [1, 3, 7, 9, 13, 17] \\
11 & 22 & [1, 3, 5, 7, 17, 19], [1, 3, 5, 9, 17, 21] \\
11 & 25 & [1, 1, 6, 6, 11, 16], [1, 1, 6, 6, 16, 21] \\
11 & 25 & [1, 1, 6, 11, 11, 16], [1, 1, 6, 11, 11, 21] \\
11 & 25 & [1, 4, 6, 9, 11, 16], [1, 4, 6, 9, 11, 21] \\
11 & 25 & [1, 4, 6, 11, 14, 16], [1, 4, 6, 11, 16, 19] \\
11 & 25 & [1, 4, 6, 11, 14, 19], [1, 4, 6, 14, 21, 24] \\
11 & 25 & [1, 4, 6, 9, 16, 21], [1, 4, 6, 9, 19, 24] \\
11 & 25 & [1, 4, 6, 9, 16, 24], [1, 4, 6, 9, 19, 21] \\
11 & 25 & [1, 4, 6, 11, 19, 21], [1, 4, 6, 16, 19, 21] \\
\end{array}
$

\bigskip 

\caption{
	Families of $(2n-1)$-dimensional CR isospectral CR lens spaces with fundamental group of order $k$, with $k$ at most $121,100,32$ and $25$ for $n=3,4,5$ and $6$ respectively.  
	Note that for $2n-1=9$ and $k=32$ there are four CR lens spaces which are mutually CR isospectral. 
}
\label{table:isosp}
\end{table}

Curiously, the $5$-dimensional CR lens spaces occurring in Table~\ref{table:isosp} are indeed $p$-isospectral for all $p$ as Riemannian lens spaces (cf.\ \cite{LMR-onenorm}). 
For instance, as Riemannian lens spaces we have the isometries $L(49;1, 8, 22)\simeq L(49;1, 6, 15)$ and $L(49;1, 8, 36)\simeq L(49;1, 6, 20)$ by Proposition~\ref{prop:isometria}, while $L(49;1, 6, 15), L(49;1, 6, 20)$ is the pair of highest volume (i.e.\ smallest fundamental group) of $5$-dimensional lens spaces $p$-isospectral for all $p$ found in \cite[Table~1]{LMR-onenorm}. 
Computational experiments for $k\leq 300$ provide evidence that there is a bijection between pairs of $5$-dimensional (non-isometric) Riemannian lens spaces $p$-isospectral for all $p$ with fundamental group of order $k$ and pairs of $5$-dimensional CR isospectral (non-equivalent) CR lens spaces with fundamental group of order $k$. 

\begin{question}
What is the exact relation between $5$-dimensional Riemannian lens spaces $p$-isospectral for all $p$ and $5$-dimensional CR isospectral CR lens spaces?
\end{question}

\section{A large family of CR isospectral pairs} \label{sec:CRisosppairs}

The main result of this section, Theorem~\ref{thm:theoremn}, provides an enormous family of pairs of CR isospectral CR lens spaces. 
This family is inspired by the families considered in \cite{LMR-onenorm,DD18}.

\begin{theorem}\label{thm:theoremn}
Let $n,r$ be integers such that $n\ge 3$ and $r>3$, $r$ odd, and we set 
$k=r^2$, $\tt=r+1$.
For integers $0\le a_1<a_2<\dots<a_n<r$ such that $\gcd(a_i-a_j,r)=1$ for all $i\neq j$,
the CR lens spaces
\begin{equation}
\begin{aligned}
L^+&:= L\big(r^2; \tt^{a_1}, \tt^{a_2},\dots, \tt^{a_n}\big),\\
L^-&:= L\big(r^2; \tt^{-a_1}, \tt^{-a_2},\dots, \tt^{-a_n}\big),
\end{aligned}
\end{equation} 
are CR isospectral.
Moreover, $F_{L^+}(z,w)=F_{L^-}(z,w)$.  
\end{theorem}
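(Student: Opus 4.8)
The plan is to prove the stronger assertion $F_{L^+}(z,w)=F_{L^-}(z,w)$; CR isospectrality then follows from Proposition~\ref{prop:F_Gamma=>CR-isosp}. Write $k=r^2$ and $s^{\pm}=(\theta^{\pm a_1},\dots,\theta^{\pm a_n})$, so that $L^{\pm}=L(k;s^{\pm})$. Since $F_\Gamma(z,w)=\sum_{p,q}\dim\HH_{p,q}^\Gamma\,z^pw^q$ and $\dim\HH_{p,q}^\Gamma=\dim\PP_{p,q}^\Gamma-\dim\PP_{p-1,q-1}^\Gamma$ by \eqref{eq:Ppq=Hpq+|z|^2Pp-1q-1}, it suffices to prove that $\dim\PP_{p,q}^{\Gamma_{k,s^+}}=\dim\PP_{p,q}^{\Gamma_{k,s^-}}$ for all $p,q\ge0$. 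The monomial computation in the proof of Theorem~\ref{thm:F_Gamma(z,w)} shows that $\dim\PP_{p,q}^{\Gamma_{k,s}}$ equals
\[
N_{p,q}(s):=\#\Big\{(\alpha,\beta)\in\N_0^n\times\N_0^n:\ \textstyle\sum_i\alpha_i=p,\ \sum_i\beta_i=q,\ \sum_i s_i(\alpha_i-\beta_i)\equiv0 \!\!\pmod{r^2}\Big\},
\]
so the whole statement reduces to the arithmetic identity $N_{p,q}(s^+)=N_{p,q}(s^-)$.

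The engine of the proof is the congruence $\theta^{a}\equiv 1+ar\pmod{r^2}$, immediate from the binomial theorem; in particular $\theta$ has order $r$ modulo $r^2$ and $\theta^{a}+\theta^{-a}\equiv 2\pmod{r^2}$. Writing $\gamma_i=\alpha_i-\beta_i$, so that $\sum_i\gamma_i=p-q$, I would substitute $\theta^{-a_i}\equiv 2-\theta^{a_i}$ to obtain $\sum_i\theta^{-a_i}\gamma_i\equiv 2(p-q)-\sum_i\theta^{a_i}\gamma_i\pmod{r^2}$; hence $N_{p,q}(s^-)$ counts the same pairs as $N_{p,q}(s^+)$ but with the congruence $\sum_i\theta^{a_i}\gamma_i\equiv 2(p-q)$ in place of $\equiv 0$. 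Reducing with $\theta^{a_i}\equiv 1+a_ir$ and $\sum_i\gamma_i=p-q$, each of the two congruences forces $r\mid(p-q)$ (when $r\nmid(p-q)$ both counts vanish and there is nothing to prove) and, putting $g=(p-q)/r$, becomes modulo $r$ the single condition $\sum_i a_i\gamma_i\equiv -g$ for $s^+$ and $\sum_i a_i\gamma_i\equiv +g$ for $s^-$. Thus everything reduces to the reflection symmetry $M(-g)=M(g)$ of the counting function
\[
M(v):=\#\Big\{(\alpha,\beta):\ \textstyle\sum_i\alpha_i=p,\ \sum_i\beta_i=q,\ \sum_i a_i(\alpha_i-\beta_i)\equiv v\!\!\pmod r\Big\}.
\]

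The symmetry $M(g)=M(-g)$ is where the real work lies, and I would prove it by adapting the counting method of DeFord and Doyle \cite{DD18}. It cannot come from a term-by-term matching of the sums: all exponents $\theta^{a_i}m$ share the residue $m$ modulo $r$ (because $\theta\equiv1\pmod r$), so any reindexing realizing $a_i\mapsto -a_i$ would have to fix that residue and then reflect the multiset $\{a_im\bmod r\}$, which is impossible unless that multiset happens to be symmetric — and in general it is not. Moreover $M(v)$ is genuinely non-constant in $v$, so no equidistribution is available and the symmetry is special to the value $\pm g$ tied to $p-q$. Here the hypothesis $\gcd(a_i-a_j,r)=1$ enters decisively: it guarantees that for every $c\not\equiv0\pmod r$ the residues $\{c\,a_i\bmod r\}_i$ are pairwise distinct. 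Concretely I would expand $M(v)$ by the finite Fourier transform on $\Z/r\Z$,
\[
M(v)=\frac1r\sum_{c=0}^{r-1}\xi_r^{-cv}\,[X^pY^q]\prod_{i=1}^n\frac{1}{(1-X\,\xi_r^{c a_i})(1-Y\,\xi_r^{-c a_i})},
\]
and try to show, using this distinctness to control the partial-fraction expansion of each factor, that the contributions of $c$ and $r-c$ recombine into an expression invariant under $v\mapsto -v$ precisely at $v=\pm g$. I expect this recombination to be the main obstacle, as it is exactly the point at which the global constraint $r\mid(p-q)$ must be fed back in; by contrast, the reduction above and the elementary congruences for $\theta$ are routine.
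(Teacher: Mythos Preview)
Your reduction is correct and clean: via $\theta^{\pm a}\equiv 1\pm ar\pmod{r^2}$ you reduce $F_{L^+}=F_{L^-}$ to the single symmetry $M(g)=M(-g)$ with $g=(p-q)/r$. But the proof then stops. You acknowledge that this symmetry ``is where the real work lies'' and offer only a vague plan (``try to show \dots\ that the contributions of $c$ and $r-c$ recombine''). As written, this is a genuine gap at exactly the decisive step; nothing in the proposal actually establishes $M(g)=M(-g)$.

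In fact your Fourier setup is one observation away from a complete argument, and you are being too pessimistic. For $c\not\equiv 0\pmod r$ the hypothesis $\gcd(a_i-a_j,r)=1$ makes the $\xi_r^{\,ca_i}$ pairwise distinct, so $\prod_i(1-X\xi_r^{\,ca_i})$ divides $1-X^r$ and the quotient has degree $r-n<r$; consequently $A_p(c):=[X^p]\prod_i(1-X\xi_r^{\,ca_i})^{-1}$ equals the $(p\bmod r)$-th coefficient of that quotient and hence depends only on $p\bmod r$. When $r\mid(p-q)$ this gives $A_p(c)=A_q(c)$, so $A_p(c)\overline{A_q(c)}=|A_p(c)|^2$ is real, and pairing $c$ with $r-c$ in your Fourier expansion immediately yields $M(v)=M(-v)$ for \emph{every} $v$, not only $v=g$. (Your instinct that $M$ is generally non-symmetric is right when $r\nmid(p-q)$, but once $r\mid(p-q)$ the full symmetry falls out.) With this one line inserted, your approach becomes a valid and rather short proof.

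The paper's proof is entirely different: it works with the rational expression \eqref{eq:F_Lcomputacional}, cancels the common terms with $r\mid m$, and then proves the remaining polynomial identity in $(z,w)$ by specializing first $z$ and then $w$ to $k$th roots of unity, matching the surviving summands on the two sides term by term via an explicit index swap. That argument is more hands-on and never leaves the generating-function world; your counting reduction is conceptually cleaner and, once completed as above, shorter.
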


\begin{proof}
From \eqref{eq:F_Lcomputacional}, we have to show that 
\begin{equation}\label{eq:identitypolynomials}
\sum_{m=0}^{k-1} \frac{(z^k-1)(w^k-1)}{\prod_{j=1}^n(z-\xi^{m\tt^{a_j}}) (w-\xi^{-m\tt^{a_j}})} 
= \sum_{m=0}^{k-1} \frac{(z^k-1)(w^k-1)}{\prod_{j=1}^n(z-\xi^{m\tt^{-a_j}}) (w-\xi^{-m\tt^{-a_j}})}
.
\end{equation}
For simplicity, we have written $\xi=\xi_k=e^{2\pi\mi/k}$.
Note that $\theta^{a}\equiv ar+1\pmod{r^2}$ for all $a\in\Z$. 
It follows that the $m$-th terms in \eqref{eq:identitypolynomials} on both sides coincide if $r\mid m$, since if we look at the denominators when $m=ar$ we have 
$\,
z-\xi^{m\theta^{\pm a_j}} = z-\xi^{ar(1\pm a_jr)} 
= z-\xi^{ar}$,
for $j=1,2,\dots, n$, and similarly for the variable $w$. 
Thus, for $a=0,1,\dots,r-1$, the $m$-th term for both, when $r\mid m=ar$, is
$$
\frac{(z^k-1)(w^k-1)}{(z-\xi^{ar})^n(w-\xi^{-ar})^n}
$$
hence it can be simplified.
Moreover, 
\begin{equation*}
P_{L^\pm}(z,w):= 
\sum_{\substack{1\leq m\leq k-1,\\ r\,\nmid \, m}} \frac{(z^k-1)(w^k-1)}{\prod_{j=1}^n(z-\xi^{m\tt^{\pm a_j}})(w-\xi^{-m\tt^{\pm a_j}})}
\end{equation*}
are polynomials in $z$ and $w$ since the $k$-th roots of unity $\{\xi^{m\theta^{\pm a_j}} : 1\leq j\leq n\}$ (resp.\ $\{\xi^{-m\theta^{\pm a_j}} : 1\leq j\leq n\}$) are pairwise different. 
Indeed, $m\theta^{\pm a_i}\equiv m\theta^{\pm a_j}\pmod {r^2}$ if and only if $\pm m(a_i-a_j)\equiv 0\pmod {r}$, which is not possible unless $i=j$ because $r\nmid m$ and $\gcd(a_i-a_j,r)=1$ if $i\neq j$.

We now fix $w$ and 
consider $P_{L^\pm}(z,w)$ as polynomials in the variable $z$; they are of degree $r^2-n$. 
We claim that $P_{L^+}(\xi^{m_0},w)=P_{L^-}(\xi^{m_0},w)$ for all ${m_0}\in\Z$, $0\le m_0\le r^2-1$, which will imply that they are equal.
First, if $m_0=ar$,  $a=0,\dots,r-1$, we notice that both polynomials are equal to zero, since the root $\xi^{ar}$ occurs in the numerators and not in the denominators.  In the case $m_0\neq ar$,
clearly, the $m$-th term of $P_{L^{\pm}}(\xi^{m_0},w)$ vanishes unless $m=m_0\tt^{\mp a_i}$ for some $1\leq i\leq n$. 
Thus, 
\begin{equation*}
P_{L^\pm}(z_0,w)
= \sum_{i=1}^n  \frac{(z_0^k-1)(w^k-1)}{\prod_{j=1}^n(z_0-\xi^{m_0\tt^{\pm( a_j-a_i)}})(w-\xi^{-m_0\tt^{\pm( a_j-a_i)}})}
,
\end{equation*}
where $z_0=\xi^{m_0}$. 
We note that in this expression all the factors in the denominators occurs also in the numerators and 
could be simplified.

Now, there will be a simplification of factors when comparing the $P_{L^\pm}(z_0,w)$. 
We define the set $E=\{ m_0 \theta^{a_j-a_i} : i \neq j \}$, and $|E|$ is the cardinal of $E$.   
We note that
$$(z_0^k-1)(w^k-1)=\prod_{\ell\in E} (z_0-\xi^\ell)(w-\xi^{-\ell}) 
\prod_{\ell\not\in E} (z_0-\xi^\ell)(w-\xi^{-\ell}).$$
As said above, in each expression, the factors in the denominators when $j=i$ cancel out with the corresponding factors in the numerators (those with $\ell=m_0$). 
And since the product $\prod_{\ell\not\in E,\,\ell\ne m_0} (z_0-\xi^\ell)(w-\xi^{-\ell})$ occurs in the numerator of every term and never in the denominator, it can be simplified. 
Hence, by setting  
\begin{equation*}
\tilde P_{L^\pm}(z_0,w)= \sum_{i=1}^{n} \frac{\prod_{\ell\in E} (z_0-\xi^{\ell})(w-\xi^{-\ell})}{\prod_{j=1,j\neq i}^{n}(z_0-\xi^{m_0\tt^{\pm(a_j-a_i)}})(w-\xi^{-m_0\tt^{\pm(a_j-a_i)}})},
\end{equation*}
it suffices to prove $\tilde P_{L^+}(z_0,w)=\tilde P_{L^-}(z_0,w)$. 
We will prove this for every $w$ by considering
it as a polynomial equality. Notice that both are polynomials of degree $|E|-(n-1)$ in the variable $w$, and we will prove that they are equal for $|E|$ different values of $w$, namely,  $w_0:=\xi^{-\ell_0}$, $\ell_0\in E$.  

We first note that most of the terms in $\tilde P_{L^\pm}(z_0,w_0)$  vanish. 
However, it does not vanish in $\tilde P_{L^+}(z_0,w_0)$ the term with $i=s$ when there is a $t$, $1\le t\le n$, such that 
$\xi^{-m_0 \tt^{(a_t-a_s)}}=\xi^{-\ell_0}$.
In this case, it does not vanish in $\tilde P_{L^-}(z_0,w_0)$ the term with $i=t$. Moreover,
we will prove that these two terms are equal in the following way. Since in these terms we have the factor $(w_0-\xi^{-\ell_0})$ in the numerator and in the denominator, then it is convenient to ignore them, hence we can write the identity that we have to prove as follows
\begin{align*}
    \frac{\prod_{\ell\in E-\{\ell_0\}} (z_0-\xi^{\ell})(w_0-\xi^{-\ell})}{\prod_{j=1,j\neq s,t}^{n}(z_0-\xi^{m_0\tt^{a_j-a_s}})(w_0-\xi^{-m_0\tt^{a_j-a_s}})} &= \frac{\prod_{\ell\in E-\{\ell_0\}} (z_0-\xi^{\ell})(w_0-\xi^{-\ell})}{\prod_{j=1,j\neq t,s}^{n}(z_0-\xi^{m_0\tt^{a_t-a_j}})(w_0-\xi^{-m_0\tt^{a_t-a_j}})}.
\end{align*}
Since all factors are different from zero and the numerators are equal, they can be simplified, 
and we are done by verifying the following identity
$$
\prod_{j=1,j\neq t,s}^{n}(z_0-\xi^{m_0\tt^{a_t-a_j}})(w_0-\xi^{-m_0\tt^{a_t-a_j}}) =
\prod_{j=1,j\neq s,t}^{n}(z_0-\xi^{m_0\tt^{a_j-a_s}})(w_0-\xi^{-m_0\tt^{a_j-a_s}}).  
$$
In order to check this, we will show that for each $j$, the factors corresponding to the index $j$ are the same in both sides.  
\begin{align*}
    (z_0-\xi^{m_0\tt^{a_t-a_j}})(w_0-\xi^{-m_0\tt^{a_t-a_j}}) &= (z_0-\xi^{m_0\tt^{a_j-a_s}})(w_0-\xi^{-m_0\tt^{a_j-a_s}}),  \quad \textrm{or equivalently}     \\
    z_0w_0-z_0\xi^{-m_0\tt^{a_t-a_j}}-w_0\xi^{m_0\tt^{a_t-a_j}} +1 &= z_0w_0-z_0\xi^{-m_0\tt^{a_j-a_s}}-w_0\xi^{m_0\tt^{a_j-a_s}} +1. 
\end{align*}
After simplifications and replacements ($z_0=\xi^{m_0}$, $w_0=\xi^{-m_0\tt^{a_t-a_s}}$), we have  
\begin{align*}
    \xi^{m_0}\xi^{-m_0\tt^{a_t-a_j}} + \xi^{-m_0\tt^{a_t-a_s}}\xi^{m_0\tt^{a_t-a_j}} &= \xi^{m_0}\xi^{-m_0\tt^{a_j-a_s}} + \xi^{-m_0\tt^{a_t-a_s}}\xi^{m_0\tt^{a_j-a_s}},  \quad \textrm{or equivalently} \\
    \xi^{m_0(a_j-a_t)r} + \xi^{m_0(a_s-a_j)r} &= \xi^{m_0(a_s-a_j)r} + \xi^{m_0(a_j-a_t)r}, 
\end{align*}
which is true, proving the identity claimed.

Now, if there is another pair $(s',t')\ne (s,t)$ such that   $\xi^{-m_0 \tt^{(a_{t'}-a_{s'})}}=\xi^{-\ell_0}$,
then $s'\ne s$, $t'\ne t$ and the term with $i=s'$ in $\tilde P_{L^+}(z_0,w_0)$ does not vanish and it is equal
to the term with $i=t'$ in $\tilde P_{L^-}(z_0,w_0)$, as we have already proved. The proof is complete since we took into account all the terms in the identity of $\tilde P_{L^+}(z_0,w_0)=\tilde P_{L^-}(z_0,w_0)$.
\end{proof}

The next result ensures that $L^+$ and $L^-$ as in Theorem \ref{thm:theoremn} are generically not equivalent as CR lens spaces. 

\begin{proposition}\label{prop:notequiv}
The CR lens spaces $L^+$ and $L^-$ as in Theorem \ref{thm:theoremn} are equivalent if and only if there is a permutation $\sigma \in  S_n$ which is the product of $\lfloor \frac{n}{2} \rfloor$ disjoint simple transpositions, and $a_i+a_{\sigma(i)} \equiv a_j + a_{\sigma(j)} {\pmod r}$, for all $i,j$. 
\end{proposition}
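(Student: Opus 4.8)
The plan is to reduce the whole statement to the combinatorial criterion of Proposition~\ref{prop:CR-isometria}. By that result, $L^+$ and $L^-$ are equivalent if and only if there exist a permutation $\sigma\in S_n$ and an integer $c$ prime to $r^2$ with
$$\theta^{a_i}\equiv c\,\theta^{-a_{\sigma(i)}}\pmod{r^2}\qquad\text{for all }i,$$
equivalently $c\equiv\theta^{a_i+a_{\sigma(i)}}\pmod{r^2}$ for every $i$. The first thing I would record is the congruence $\theta^{a}\equiv 1+ar\pmod{r^2}$ already used in the proof of Theorem~\ref{thm:theoremn}. It shows at once that $\theta^{r}\equiv 1\pmod{r^2}$ and that $\theta^{a}\equiv\theta^{b}\pmod{r^2}$ precisely when $a\equiv b\pmod r$; in other words $\theta$ has order exactly $r$ modulo $r^2$, so $\theta^{a_i+a_{\sigma(i)}}$ depends only on the residue $a_i+a_{\sigma(i)}\bmod r$.

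With this observation the equivalence condition becomes the existence of $\sigma\in S_n$ for which $a_i+a_{\sigma(i)}\bmod r$ is independent of $i$, i.e.\ $a_i+a_{\sigma(i)}\equiv a_j+a_{\sigma(j)}\pmod r$ for all $i,j$. For the converse direction of the proposition I would, given such a $\sigma$ with common value $c_1\equiv a_i+a_{\sigma(i)}\pmod r$, simply set $c:=\theta^{c_1}$; since $c\equiv 1\pmod r$ it is prime to $r^2$, and by construction it realizes the required congruence, hence the CR equivalence via Proposition~\ref{prop:CR-isometria}. For the direct direction, an equivalence supplies $\sigma$ and $c$, and comparing $c\equiv\theta^{a_i+a_{\sigma(i)}}\pmod{r^2}$ across indices, together with the fact that $\theta$ has order $r$, yields exactly the additive condition.

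It remains to pin down the cycle type of $\sigma$, and this is where the hypotheses on the $a_i$ enter decisively. From $0\le a_1<\dots<a_n<r$ with $\gcd(a_i-a_j,r)=1$ for $i\neq j$ one deduces that the residues $a_1,\dots,a_n$ are pairwise distinct modulo $r$. Applying the additive relation to the index $\sigma(i)$ and subtracting gives $a_i\equiv a_{\sigma^2(i)}\pmod r$, so distinctness forces $\sigma^2=\mathrm{id}$ and $\sigma$ is an involution. If $\sigma$ fixed two distinct indices $i,j$, then $2a_i\equiv 2a_j\pmod r$, that is $2(a_i-a_j)\equiv 0\pmod r$; since $r$ is odd and $\gcd(a_i-a_j,r)=1$ this is impossible, so $\sigma$ has at most one fixed point. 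A parity count (the number of fixed points of an involution is congruent to $n$ modulo $2$) then shows $\sigma$ has no fixed point when $n$ is even and exactly one when $n$ is odd, i.e.\ $\sigma$ is a product of exactly $\lfloor n/2\rfloor$ disjoint transpositions. The main obstacle is precisely this last step: the passage to the additive condition is formal, but extracting the cycle structure requires \emph{both} the pairwise coprimality $\gcd(a_i-a_j,r)=1$ (to get distinctness, hence the involution) and the oddness of $r$ (to bound the fixed points), and I would emphasize that each hypothesis is used in an essential way.
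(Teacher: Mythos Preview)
Your proof is correct and follows essentially the same route as the paper's: reduce via Proposition~\ref{prop:CR-isometria} to $c\equiv\theta^{a_i+a_{\sigma(i)}}\pmod{r^2}$, use $\theta^a\equiv 1+ar\pmod{r^2}$ (equivalently, that $\theta$ has order $r$ modulo $r^2$) to obtain the additive constancy condition, and then extract $\sigma^2=\mathrm{id}$ and the fixed-point bound exactly as the paper does. Your packaging through the multiplicative order of $\theta$ is marginally cleaner than the paper's explicit expansion $c=1+\ell r$, but the argument is the same.
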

\begin{proof}
By Proposition \ref{prop:CR-isometria}, $L^+$ and $L^-$ are equivalent as CR lens spaces if and only if
there is a permutation $\sigma\in S_n$ and $c\in\Z$, prime to $r^2$, such that $c\,\tt^{\,a_{\sigma(i)}}\equiv \tt^{-a_i} \pmod {r^2}$, for all $i=1,\dots,n$, which is equivalent to 
\begin{equation}\label{eq:c}
c (1+a_{\sigma(i)}r) \equiv 1-a_ir \pmod {r^2}.
\end{equation}
By multiplying by $r$ we get $\,rc \equiv r  \pmod {r^2}$,
which implies that $c \equiv 1 \pmod r$.
Thus, we can write $\,c=1+\ell r$, and we have, for all $i=1,\dots,n$,
$$
(1+\ell r) (1+a_{\sigma(i)}r) \equiv 1-a_ir \pmod {r^2}. 
$$
Or equivalently, 
$\;r(\ell +a_i+a_{\sigma(i)}) \equiv 0 \pmod {r^2}$.
Then we have the following:
$$
\ell \equiv -(a_1+a_{\sigma(1)}) \equiv \dots \equiv -(a_n+a_{\sigma(n)}) \pmod r.
$$
In order to see the structure of the permutation $\sigma$, the identity 
$a_i + a_{\sigma(i)} \equiv a_{\sigma(i)} + a_{\sigma^2(i)} \pmod r$ implies that 
$a_i - a_{\sigma^2(i)} \equiv 0 \pmod r$,  
thus, by the assumptions, $\sigma^2(i)=i$, for all $i=1,\dots,n$.
Furthermore, $\sigma$ may have at most one fixed point, since in case $i$ and $j$ were fixed by $\sigma$, 
then we would have $-2a_i\equiv \ell \equiv -2a_j \pmod r$, whence $r$ divides $a_i-a_j$, a contradiction.
Hence, $\sigma$ is a product of $\lfloor \frac{n}{2} \rfloor$ disjoint simple transpositions.  

The proof of the converse assertion is direct, since if we set 
$\ell = -(a_i+a_{\sigma(i)})$ and $\,c=1+\ell r$, it is clear that 
\eqref{eq:c} holds, which is the algebraic condition for $L^+$ and $L^-$ to be equivalent. 
\end{proof}

\begin{corollary}
For each $n\ge 3$, there are infinitely many pairs of $(2n-1)$-dimensional CR lens spaces which are CR isospectral but not equivalent.
\end{corollary}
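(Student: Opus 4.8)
The plan is to produce, for each fixed $n\ge 3$, an infinite sequence of parameter choices $(r,a_1,\dots,a_n)$ satisfying the hypotheses of Theorem~\ref{thm:theoremn}, and to arrange matters so that for each such choice the resulting pair $L^+,L^-$ fails the equivalence criterion of Proposition~\ref{prop:notequiv}, while different choices produce genuinely distinct (non-equivalent) pairs. First I would fix $n$ and let $r$ range over the infinitely many odd primes exceeding, say, $2n$; taking $r$ prime is the cleanest way to guarantee the coprimality condition $\gcd(a_i-a_j,r)=1$ for all $i\neq j$, since any two distinct residues $a_i,a_j$ in $\{0,1,\dots,r-1\}$ automatically satisfy $r\nmid(a_i-a_j)$. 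With $r$ prime and $r>2n$ I can freely choose $0\le a_1<a_2<\dots<a_n<r$, so the hypotheses of Theorem~\ref{thm:theoremn} are met and hence $L^+$ and $L^-$ are CR isospectral.

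The key step is to select the exponents $a_i$ so that Proposition~\ref{prop:notequiv} fails, guaranteeing that $L^+$ and $L^-$ are \emph{not} equivalent. By the proposition, equivalence forces the existence of a fixed-point-nearly-free involution $\sigma$ (a product of $\lfloor n/2\rfloor$ disjoint transpositions) with all the pairwise sums $a_i+a_{\sigma(i)}$ congruent modulo $r$. I would therefore choose the $a_i$ to violate this rigid ``constant pair-sum'' condition. For instance, one can pick the $a_i$ so that the multiset of sums $\{a_i+a_j \bmod r\}$ is as generic as possible — concretely, choose $a_i$ spaced so that no partition into pairs yields a common sum modulo $r$. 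A convenient explicit family is $a_i=i$ for $i=1,\dots,n$ (so $a_i=1,2,\dots,n$): then for any involution $\sigma$ the sums $i+\sigma(i)$ cannot all be congruent modulo $r$ when $r>2n$, because the sums are genuine integers in the range $[3,2n-1]$ and being equal modulo $r>2n$ forces them to be equal as integers, which a permutation of $\{1,\dots,n\}$ into pairs cannot achieve for $n\ge 3$. This elementary counting rules out the required $\sigma$ and establishes non-equivalence.

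Finally, to obtain \emph{infinitely many} non-equivalent pairs rather than the same pair repeated, I would observe that the order of the fundamental group $k=r^2$ is a CR-spectral invariant (indeed it is determined by $F_\Gamma$ via the multiplicity data, as noted in the introduction following \cite{Yunus-cia}). Hence pairs arising from different primes $r$ have fundamental groups of different orders $r^2$ and are therefore pairwise distinct — no lens space built from one value of $r$ can be equivalent to one built from another. Letting $r$ run over the infinitely many odd primes greater than $2n$ thus yields infinitely many pairwise distinct pairs of $(2n-1)$-dimensional CR lens spaces, each CR isospectral but not equivalent, completing the proof.

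The main obstacle I anticipate is the non-equivalence bookkeeping in the second step: one must be careful that the chosen $a_i$ defeat \emph{every} admissible involution $\sigma$ simultaneously, not merely one, and that the boundary/fixed-point case (when $n$ is odd, so $\sigma$ has exactly one fixed point) is handled — the fixed point $i_0$ imposes $\ell\equiv -2a_{i_0}$, which must still be consistent with the transposition sums, and this extra constraint needs to be checked against the chosen exponents. Choosing $r$ strictly larger than $2n$ is what converts the delicate ``congruence mod $r$'' conditions into plain integer equalities, which is the cleanest way to dispatch all cases at once.
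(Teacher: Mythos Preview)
Your overall strategy matches the paper's exactly: fix $n$, let $r$ run over infinitely many primes, pick a single tuple $(a_1,\dots,a_n)$ that works for all such $r$, invoke Theorem~\ref{thm:theoremn} for isospectrality and Proposition~\ref{prop:notequiv} for non-equivalence, and distinguish the pairs by the order $r^2$ of the fundamental group.

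However, your specific choice $a_i=i$ is wrong, and this is a genuine gap. The claim ``a permutation of $\{1,\dots,n\}$ into pairs cannot achieve [all sums equal] for $n\ge 3$'' is false: the reversal involution $\sigma(i)=n+1-i$ is a product of $\lfloor n/2\rfloor$ disjoint transpositions (with one fixed point if $n$ is odd) and satisfies $i+\sigma(i)=n+1$ for \emph{every} $i$. So with $a_i=i$ the criterion of Proposition~\ref{prop:notequiv} is met and $L^+$, $L^-$ are in fact equivalent. An arithmetic progression is precisely the worst possible choice here, since it has the built-in pairing symmetry $a_i+a_{n+1-i}=\text{const}$.

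The fix is to choose the $a_i$ without this symmetry. The paper takes $a_i=2^{i-1}-1$ (so $a_1=0$, $a_2=1$, $a_3=3$, $a_4=7,\dots$) and $r$ prime with $r\ge 2^n$; then the condition $a_i+a_{\sigma(i)}=a_j+a_{\sigma(j)}$ becomes $2^{i-1}+2^{\sigma(i)-1}=2^{j-1}+2^{\sigma(j)-1}$, which by uniqueness of binary expansion forces $\{i,\sigma(i)\}=\{j,\sigma(j)\}$ for all $i,j$, impossible for $n\ge 3$. Any choice of $a_i$ for which the multiset of pairwise sums $\{a_i+a_j\}$ has no repeats would also work; your instinct that the sums should be ``as generic as possible'' was correct, but $a_i=i$ does not realize it.
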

\begin{proof}
For $n$ fixed, we consider for each $r$ prime, $r\ge 2^n$,  a specific pair of lens spaces from the family in Theorem \ref{thm:theoremn}, namely, 
the parameters $a_i$ are $a_i:=2^{i-1}-1$, for $i=1, 2, \dots, n$, i.e.:
 $$L^+:=L\big( \theta-1)^2;\theta^0, \theta^1, \theta^3,\theta^7,\dots, \theta^{2^{n-1}-1}\big)$$
and, correspondingly, $L^-$ has the parameters $\theta^{-a_i}$ instead of $\theta^{a_i}$. 

First we verify that this pair belongs to the family. 
Since $r$ is prime and $r>a_n>\dots>a_1=0$, then  $(a_i-a_j,r)=1$ for every $i\neq j$. 
Hence, $L^+$ and $L^-$ are CR isospectral. 

Now, the condition in Proposition~\ref{prop:notequiv} for being equivalent as CR lens spaces 
is $\,a_i+a_{\sigma(i)}\equiv a_j+a_{\sigma(j)} \pmod r$, for all $i,j=1,\dots,n$, which since $r\ge 2^n$ reduces to,
$\,2^i+2^{\sigma(i)}=2^j+2^{\sigma(j)}$, which is possible only when $\{i,\sigma(i)\}=\{j,\sigma(j)\}$, for all $i,j=1,\dots,n$, 
which is clearly not possible to happen for $n\ge 3$. Hence $L^+$ and $L^-$ are not equivalent. 
\end{proof}{\scriptsize}

\begin{remark}
We just point out that most of the examples of CR isospectrality found with the help of a computer program as in Section~\ref{sec:computational}, having the order of the fundamental group $k=r^2$ are 
pairs already provided by the family above. It would be interesting to estimate the percentage of these examples
covered by this family. 
\end{remark}

\section{CR isospectral elliptic manifolds with non-cyclic fundamental groups} \label{sec:isospsphericalspaceforms}

The main goal in this section is to find finite subgroups $\Gamma,\Gamma'$ of $\Ut(d)$ acting freely on $S^{2d-1}$ such that $F_{\Gamma}(z,w)=F_{\Gamma'}(z,w)$, which implies that the elliptic CR manifolds $S^{2d-1}/\Gamma$ and $S^{2d-1}/\Gamma'$ are CR isospectral by Proposition~\ref{prop:F_Gamma=>CR-isosp}. 
In this section we have replaced the letter $n$ by $d$ for historical reasons.

Two finite subgroups $H,H'$ of a group $G$ are said to be \emph{almost conjugate in $G$} if there is a bijection between $H$ and $H'$ that preserves the conjugacy class in $G$. 

\begin{theorem}\label{thm:almostconjugate}
Let $\Gamma$ and $\Gamma'$ be two finite subgroups of $\Ut(d)$ acting freely on $S^{2d-1}$. 
If $\Gamma$ and $\Gamma'$ are almost conjugate in $\Ut(d)$, then $F_{\Gamma}(z,w)=F_{\Gamma'}(z,w)$, and consequently $S^{2d-1}/\Gamma$ and $S^{2d-1}/\Gamma'$ are CR isospectral. 
\end{theorem}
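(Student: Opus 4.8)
The plan is to reduce the statement to the rational expression for $F_\Gamma(z,w)$ established in Theorem~\ref{thm:F_Gamma(z,w)} and to exploit that each summand appearing there is a conjugacy-class invariant in $\Ut(d)$. Writing
\[
F_{\Gamma}(z,w) = \frac1{|\Gamma|} \sum_{\gamma\in\Gamma} f(\gamma),
\qquad\text{where}\quad
f(\gamma):=\frac{1-zw}{\det(z-\gamma)\det(w-\bar\gamma)},
\]
the entire argument will rest on the observation that $\gamma\mapsto f(\gamma)$ is constant on conjugacy classes of $\Ut(d)$, so that almost conjugacy is already enough to match the two sums term by term.

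First I would verify that $f$ depends only on the multiset of eigenvalues of $\gamma$. Indeed $\det(z-\gamma)=\prod_\lambda(z-\lambda)$ by definition, with $\lambda$ ranging over the eigenvalues of $\gamma$, so this factor is unchanged if $\gamma$ is replaced by any matrix with the same eigenvalues. For the factor $\det(w-\bar\gamma)$ I would note that the eigenvalues of $\bar\gamma$ are the complex conjugates of those of $\gamma$; hence if two elements share the same eigenvalue multiset, so do their complex conjugates, and this factor coincides as well. Since conjugation by an element of $\Ut(d)$ preserves eigenvalues, it follows that $f(\gamma)=f(A\gamma A^{-1})$ for every $A\in\Ut(d)$, i.e.\ $f$ is a class function on $\Ut(d)$ valued in rational functions of $z,w$.

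With this in hand the conclusion would be immediate. By the definition of almost conjugacy there is a bijection $\phi\colon\Gamma\to\Gamma'$ such that $\gamma$ and $\phi(\gamma)$ are conjugate in $\Ut(d)$ for every $\gamma\in\Gamma$; in particular $|\Gamma|=|\Gamma'|$. Using that $f$ is a class function together with the fact that $\phi$ is a bijection, I would then compute
\[
F_{\Gamma}(z,w)=\frac1{|\Gamma|}\sum_{\gamma\in\Gamma} f(\gamma)
=\frac1{|\Gamma'|}\sum_{\gamma\in\Gamma} f(\phi(\gamma))
=\frac1{|\Gamma'|}\sum_{\gamma'\in\Gamma'} f(\gamma')
=F_{\Gamma'}(z,w).
\]
Finally, the CR isospectrality of $S^{2d-1}/\Gamma$ and $S^{2d-1}/\Gamma'$ would follow at once from Proposition~\ref{prop:F_Gamma=>CR-isosp}.

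As for difficulties, there is essentially no serious obstacle here: the argument is a direct reindexing once the rational formula of Theorem~\ref{thm:F_Gamma(z,w)} is available. The only point requiring a moment's care is the handling of the conjugate matrix $\bar\gamma$, namely confirming that conjugacy in $\Ut(d)$ (rather than merely in $\GL(d,\C)$) keeps both determinant factors fixed; this is harmless, since conjugation preserves eigenvalues and passage to complex conjugates of the eigenvalues respects equality of multisets.
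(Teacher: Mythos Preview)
Your proof is correct and follows essentially the same approach as the paper: both invoke the rational expression of Theorem~\ref{thm:F_Gamma(z,w)}, observe that each summand depends only on the $\Ut(d)$-conjugacy class of $\gamma$ (via its eigenvalue multiset), and then reindex the sum through the almost-conjugacy bijection. The only cosmetic difference is that the paper phrases the class-invariance directly as $\det(z-\gamma)=\det(z-\gamma')$ for conjugate elements, while you route it through the eigenvalue multiset and treat the $\bar\gamma$ factor separately; the content is the same.
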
 
\begin{proof} 
It is well known that two elements $\gamma,\gamma'$ belong to the same conjugacy class in $\Ut(d)$ if and only if their spectra coincide, which is equivalent to $\det(z-\gamma)=\det(z-\gamma')$.
Thus, by hypothesis, there is a bijection $\tau:\Gamma\to \Gamma'$ such that $\det(z-\gamma)=\det(z-\tau(\gamma))$ for all $\gamma\in\Gamma$. 

Now, Theorem~\ref{thm:F_Gamma(z,w)} implies that 
\begin{equation}
\begin{aligned}
F_{\Gamma}(z,w) &
= \frac1{|\Gamma|} \sum_{\gamma\in\Gamma} \frac{1-zw}{\det(z-\gamma) \det(w-\bar \gamma)}
= \frac1{|\Gamma|} \sum_{\gamma\in\Gamma} \frac{1-zw}{\det(z-\tau(\gamma)) \det(w-\overline{\tau(\gamma)})}
\\ &
= \frac1{|\Gamma'|} \sum_{\gamma'\in\Gamma'} \frac{1-zw}{\det(z-\gamma') \det(w-\overline{\gamma'})}
= F_{\Gamma'}(z,w),
\end{aligned}
\end{equation} 
as asserted. 
\end{proof}
 
The analogous result in the Riemannian context (i.e.\ for spherical space forms and isospectrality with respect to the Laplace-Beltrami operator on such Riemannian manifolds) was given by Ikeda in \cite[Cor.~2.3]{Ikeda80_3-dimI}.
Curiously, Ikeda's result is a particular case of the famous Sunada method \cite{Sunada85}, which had not been developed at that time. 

\begin{remark}
Under the same hypothesis as in Theorem~\ref{thm:almostconjugate}, the spherical space forms $S^{2d-1}/\Gamma$ and $S^{2d-1}/\Gamma'$ are \emph{strongly isospectral}, which means they are isospectral with respect to every natural strongly elliptic operator acting on sections of a natural vector bundle, including the Hodge-Laplace operator acting on $p$-forms. 
\end{remark}

The rest of the section is devoted to describe some known examples of almost conjugate subgroups of $\Ut(d)$ acting freely on $S^{2d-1}$ such that their corresponding elliptic manifolds are not diffeomorphic. 

For $m,n,r\in\N$ such that $\gcd(n(r-1),m)=1$ and $p\mid n'$ for all prime divisor $p$ of $d$, where $d$ is the order of $r$ in $\Z_n^\times$ and $n'=n/d$, we denote by $\Gamma_d(m,n,r)$ the group generated by $A,B$ with relations $A^m=e$, $B^n=e$, and $BAB^{-1} =A^r$.
These groups are called of Type I. 
They admit fixed point free complex representations, that is, $\rho:\Gamma_d(m,n,r)\to\Ut(d)$ such that the subgroup $\rho(\Gamma_d(m,n,r))$ of $\Ut(d)$ acts freely on $S^{2d-1}$, and consequently $S^{2d-1}/\rho(\Gamma_d(m,n,r))$ is an elliptic manifold.  
It is known that all irreducible fixed point free complex representations of $\Gamma_d(m,n,r)$ have degree $d$, thus the corresponding elliptic manifolds have dimension $2d-1$. 

\begin{theorem}\label{thm:Gamma_d(m,n,r)-irreps}
Let $\Gamma_d(m,n,r)$ be a group of Type I as above. 
If $\rho,\rho'$ are two irreducible fixed point free representations of $\Gamma_d(m,n,r)$, then $\rho(\Gamma_d(m,n,r))$ and $\rho'(\Gamma_d(m,n,r))$ are almost conjugate subgroups of $\Ot(2d)$. 
\end{theorem}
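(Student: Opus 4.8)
The plan is to build a spectrum-preserving bijection between the two images by combining the Galois relation between fixed point free representations of a Type~I group with a twisted power map. Write $\Gamma=\Gamma_d(m,n,r)$ and $N=|\Gamma|$. Since $\rho(\Gamma)$ and $\rho'(\Gamma)$ act freely on $S^{2d-1}$, both $\rho$ and $\rho'$ are faithful, and every $\rho(\gamma)$ and $\rho'(\gamma)$ has all its eigenvalues among the $N$-th roots of unity. As noted in the proof of Theorem~\ref{thm:almostconjugate}, two elements of $\Ut(d)$ are conjugate if and only if they have the same multiset of eigenvalues, and since $\Ut(d)\subset\Ot(2d)$, elements conjugate in $\Ut(d)$ are also conjugate in $\Ot(2d)$. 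Therefore it suffices to produce a bijection $\tau\colon\rho(\Gamma)\to\rho'(\Gamma)$ for which $\tau(h)$ and $h$ always have the same eigenvalues.

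The key input, to be extracted from Wolf's classification of the fixed point free representations of Type~I groups \cite{Wolf-book}, is that $\rho$ and $\rho'$ are Galois conjugate up to an automorphism of $\Gamma$. Concretely, I would show that there are an integer $c$ with $\gcd(c,N)=1$ and an automorphism $\alpha\in\Aut(\Gamma)$ such that $\rho'$ is equivalent to $\sigma_c\circ\rho\circ\alpha$, where $\sigma_c$ is the Galois automorphism of $\Q(e^{2\pi\mi/N})/\Q$ fixing $\Q$ and sending each $N$-th root of unity $\mu$ to $\mu^{c}$, applied entrywise. Granting this, for every $g\in\Gamma$ the matrices $\rho'(g)$ and $\sigma_c(\rho(\alpha(g)))$ are conjugate in $\GL(d,\C)$, so the eigenvalues of $\rho'(g)$ are exactly $\{\mu^{c}:\mu\text{ an eigenvalue of }\rho(\alpha(g))\}$.

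With this spectral relation in hand, let $\bar c$ be the inverse of $c$ modulo $N$; then $\gcd(\bar c,N)=1$, and since the exponent of $\Gamma$ divides $N$, the power map $\gamma\mapsto\gamma^{\bar c}$ is a bijection of $\Gamma$ (its inverse is $\gamma\mapsto\gamma^{e}$ for any $e$ with $e\bar c\equiv1$ modulo the exponent; these are bijections of a finite group, though not homomorphisms, because on each cyclic subgroup the two exponents are mutually inverse). I define
\begin{equation*}
\tau\colon\rho(\Gamma)\longrightarrow\rho'(\Gamma),\qquad \tau(\rho(\gamma))=\rho'\big(\alpha^{-1}(\gamma^{\bar c})\big).
\end{equation*}
This is well defined and bijective because $\rho,\rho'$ are faithful and $\gamma\mapsto\alpha^{-1}(\gamma^{\bar c})$ is a bijection of $\Gamma$. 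Moreover, by the spectral relation above the eigenvalues of $\tau(\rho(\gamma))=\rho'(\alpha^{-1}(\gamma^{\bar c}))$ are $\{\mu^{c}:\mu\text{ an eigenvalue of }\rho(\gamma^{\bar c})\}=\{(\nu^{\bar c})^{c}:\nu\text{ an eigenvalue of }\rho(\gamma)\}=\{\nu:\nu\text{ an eigenvalue of }\rho(\gamma)\}$, using $c\bar c\equiv1\pmod N$ and $\nu^{N}=1$. Hence $\tau(\rho(\gamma))$ and $\rho(\gamma)$ have the same eigenvalues, so they are conjugate in $\Ut(d)$, and therefore in $\Ot(2d)$. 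This exhibits $\rho(\Gamma)$ and $\rho'(\Gamma)$ as almost conjugate in $\Ot(2d)$ (indeed already in $\Ut(d)$), which is the assertion; the CR isospectrality then follows from Theorem~\ref{thm:almostconjugate}.

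The step I expect to be the main obstacle is the structural claim of the second paragraph: that any two fixed point free irreducible representations of $\Gamma_d(m,n,r)$ lie in a single orbit under the combined action of $\mathrm{Gal}(\Q(e^{2\pi\mi/N})/\Q)$ and $\Aut(\Gamma)$. Establishing it requires the explicit description of these representations as induced from the linear characters of the cyclic normal subgroup $\langle A\rangle$, an analysis of how $\sigma_c$ and the automorphisms of $\Gamma$ permute the inducing characters, and a check that the fixed point free condition (governed by the arithmetic constraints on $m,n,r$) isolates exactly one such orbit. Once that is in place, the remaining bijection argument is purely formal.
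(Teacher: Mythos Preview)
The paper does not actually prove this theorem: immediately after the statement it simply records that ``This result was first observed by Ikeda~\cite{Ikeda83}. Later, Wolf~\cite{Wolf01} explained it as a particular case of a more general situation\dots'' So there is no in-paper argument to compare against; the result is cited, not derived.

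That said, your outline is essentially Wolf's conceptual route: relate any two fixed point free irreducibles by a Galois twist together with an automorphism of $\Gamma$, and then undo the Galois twist by the power map $\gamma\mapsto\gamma^{\bar c}$ to get a spectrum-preserving bijection. Your formal argument in the third paragraph is correct once the structural claim is granted; in particular, the verification that $\gamma\mapsto\gamma^{\bar c}$ is a bijection of $\Gamma$ (with inverse $\gamma\mapsto\gamma^{c}$, using that the exponent of $\Gamma$ divides $N$ and $c\bar c\equiv1\pmod N$) and the eigenvalue bookkeeping are fine. Note also that you end up proving the stronger conclusion of almost conjugacy in $\Ut(d)$, which the paper itself only invokes later (in the proof of Theorem~\ref{thm:CRisosp-noncyclic}) by again pointing to Ikeda's argument.

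The one genuine gap is exactly the one you flag: the claim that $\rho'$ is equivalent to $\sigma_c\circ\rho\circ\alpha$ for some $c$ coprime to $N$ and some $\alpha\in\Aut(\Gamma)$. For Type~I groups this is not hard once one writes the $\pi_{k,l}$ explicitly (as the paper does after Theorem~\ref{thm:CRisosp-noncyclic}): since $\gcd(m,n)=1$, the Chinese Remainder Theorem lets a single Galois element $\sigma_c$ adjust the parameters $k$ (mod $m$) and $l$ (mod $n'$) independently, so in fact Galois alone already acts transitively on the fixed point free irreducibles and the automorphism $\alpha$ can be taken to be the identity. Filling this in from \cite[Thm.~5.5.6, Thm.~5.5.10]{Wolf-book} would complete your proof; as written it remains a sketch with the substantive step deferred to the references, which is also what the paper does.
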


This result was first observed by Ikeda~\cite{Ikeda83}.
Later, Wolf~\cite{Wolf01} explained it as a particular case of a more general situation including all arbitrary fixed point free representations. 

It is standard that two irreducible representations as in Theorem~\ref{thm:Gamma_d(m,n,r)-irreps} provide isometric spherical space forms (see for instance \cite[Thm.~5.5.6]{Wolf-book} for a characterization of irreducible representations of Type I groups).
However, the next lemma by Ikeda (see \cite[Lem.~2.5]{Ikeda83}) ensures the existence of infinitely many pairs of isospectral spherical space forms with non-cyclic fundamental groups (see also \cite{Gilkey85}). 

\begin{lemma}[Ikeda~\cite{Ikeda83}]\label{lem:Ikeda}
Let $\Gamma= \Gamma_d(m,n,r)$ be a group of Type I as above such that $n=d^2$. 
The number of (Riemannian) isometry classes in $(2d-1)$-dimensional spherical space forms with the same fundamental group $\Gamma$ is at least $2$ if and only if $d=5$ or $d>6$. 
\end{lemma}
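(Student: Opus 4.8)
The plan is to convert the statement into a count of isometry classes and evaluate it arithmetically. By Theorem~\ref{thm:isometries}, spherical space forms with fundamental group (abstractly) isomorphic to $\Gamma=\Gamma_d(m,d^2,r)$ correspond to fixed-point-free representations $\rho\colon\Gamma\to\Ot(2d)$, and two of them give isometric quotients exactly when their images are conjugate in $\Ot(2d)$; this amounts to classifying such $\rho$ up to the two symmetries that preserve the quotient, namely the action of $\Aut(\Gamma)$ and complex conjugation. I will show that the resulting set of classes is in bijection with $\Z_d^\times/\{\pm1\}$ (where $\Z_d^\times:=(\Z/d\Z)^\times$), and then count.

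First I would recall the parametrization of the fixed-point-free irreducible representations of $\Gamma$ from Wolf's classification of Type~I groups \cite[Thm.~5.5.6]{Wolf-book}. With $n=d^2$ one has $n'=n/d=d$, and each fixed-point-free irreducible representation $\rho_{\ell,k}$ has degree $d$, is induced from a character of the abelian subgroup $\langle A,B^d\rangle$, and is determined by a pair $(\ell,k)$ with $\ell\in\Z_m^\times$, taken modulo the orbit $\{\ell,\ell r,\dots,\ell r^{d-1}\}$, and $k\in\Z_d^\times$. Concretely $\rho_{\ell,k}(A)=\diag(\xi_m^{\ell},\xi_m^{\ell r},\dots,\xi_m^{\ell r^{d-1}})$, the matrix $\rho_{\ell,k}(B)$ is a twisted cyclic shift of the coordinates, and $\rho_{\ell,k}(B^d)=\xi_d^{k}\,\Id$ is scalar; the condition $\gcd(k,d)=1$ is precisely what makes $\rho_{\ell,k}$ fixed-point-free.

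Next I would determine the induced equivalence on the pairs $(\ell,k)$. Real ($\Ot(2d)$-)equivalence identifies each complex irreducible representation with its complex conjugate, giving $(\ell,k)\sim(-\ell,-k)$. The crucial input is the structure of $\Aut(\Gamma)$: since $\gcd((r-1)n,m)=1$ forces $[\Gamma,\Gamma]=\langle A\rangle$, this subgroup is characteristic, so every automorphism has the form $A\mapsto A^{s}$, $B\mapsto A^{u}B^{v}$; imposing the relation $BAB^{-1}=A^{r}$ yields $r^{v}\equiv r\pmod m$, hence $v\equiv 1\pmod d$. From this I would extract two facts. On the one hand, the automorphisms $A\mapsto A^{s}$, $B\mapsto B$ ($s\in\Z_m^\times$) act transitively on the admissible values of $\ell$, so the first parameter is irrelevant. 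On the other hand, using $v\equiv1\pmod d$ together with $\sum_{j=0}^{d-1}r^{j}\equiv0\pmod m$ (which follows from $r^{d}\equiv1$ and $\gcd(r-1,m)=1$), every automorphism fixes $B^{d}$; therefore the scalar $\rho_{\ell,k}(B^{d})=\xi_d^{k}$ is a genuine invariant, altered only by complex conjugation to $\xi_d^{-k}$.

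Combining these, the isometry classes are in bijection with $\Z_d^\times/\{\pm1\}$, the invariant being $k$ modulo sign, and distinct values of $k\bmod\pm$ yield non-isometric forms precisely because $\rho(B^d)=\xi_d^k\,\Id$ is a conjugation invariant. Finally I would count: for $d\le 2$ the quotient has one element, while for $d\ge3$ the involution $k\mapsto-k$ is fixed-point-free on $\Z_d^\times$ (as $d\mid 2k$ and $\gcd(k,d)=1$ force $d\le2$), so $|\Z_d^\times/\{\pm1\}|=\varphi(d)/2$. Hence the count is $\ge 2$ if and only if $\varphi(d)\ge4$, and since $\varphi(d)\le2$ holds exactly for $d\in\{1,2,3,4,6\}$, this is equivalent to $d=5$ or $d>6$, which is the assertion. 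I expect the main obstacle to be the third step: pinning down $\Aut(\Gamma)$ and verifying rigorously that $B^{d}$ is fixed by all automorphisms while $\ell$ is completely free, so that no further coincidences collapse distinct values of $k\bmod\pm1$; this is exactly where the arithmetic hypotheses on $(m,n,r)$ are used.
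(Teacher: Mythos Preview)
The paper does not supply a proof of this lemma; it is quoted from Ikeda \cite[Lem.~2.5]{Ikeda83} and used as a black box in the proof of Theorem~\ref{thm:CRisosp-noncyclic}. Your sketch therefore fills in what the paper omits, and it follows the route of Ikeda's original argument: parametrize the fixed-point-free irreducibles via \cite[Thm.~5.5.6]{Wolf-book}, compute $\Aut(\Gamma)$ explicitly (using that $\langle A\rangle=[\Gamma,\Gamma]$ is characteristic because $\gcd(r-1,m)=1$), show that the scalar $\rho(B^d)=\xi_d^{k}\,\Id$ is the only nontrivial invariant modulo complex conjugation, and conclude that the isometry classes are in bijection with $\Z_d^\times/\{\pm1\}$, of size $\varphi(d)/2$ for $d\geq3$, which is $\geq2$ exactly when $d=5$ or $d>6$.

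Your argument is correct. Two small points worth making explicit. First, the passage from Theorem~\ref{thm:isometries} to the equivalence ``$\Aut(\Gamma)$ plus complex conjugation'' uses that two faithful complex irreducible representations of degree $d$ have $\Ot(2d)$-conjugate images if and only if one is complex-equivalent to the other or to its conjugate; this is standard (the realification of $\rho$ decomposes as $\rho\oplus\bar\rho$ over $\C$), but it is the hinge of the whole reduction and deserves a sentence. Second, your computation that every automorphism fixes $B^{d}$ is the heart of the matter and is correct: with $v\equiv1\pmod d$ one gets $(A^{u}B^{v})^{d}=A^{u(1+r+\dots+r^{d-1})}B^{vd}$, the geometric sum vanishes mod $m$ since $\gcd(r-1,m)=1$, and $vd\equiv d\pmod{d^{2}}$. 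Note also that your labels $(\ell,k)$ are swapped relative to the paper's $\pi_{k,l}$; harmless, but worth flagging when you cross-reference.
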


\begin{theorem}\label{thm:CRisosp-noncyclic}
For each positive integer $d$ with $d\geq5$ and $d\neq 6$, there are CR isospectral non-equivalent elliptic CR manifolds of dimension $2d-1$ with non-cyclic fundamental groups. 
\end{theorem}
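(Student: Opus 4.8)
The plan is to exhibit the two CR manifolds as quotients $S^{2d-1}/\rho(\Gamma)$ and $S^{2d-1}/\rho'(\Gamma)$, where $\Gamma=\Gamma_d(m,n,r)$ is a Type~I group and $\rho,\rho'$ are two irreducible fixed point free representations chosen so that the associated spherical space forms are \emph{not} isometric. Since $d\ge 5>1$ forces the relation $BAB^{-1}=A^r$ to be non-trivial, $\Gamma$ is non-abelian and hence non-cyclic, so such a choice automatically meets the non-cyclicity requirement of the statement.

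First I would fix $d\ge 5$ with $d\neq 6$ and take $n=d^2$, so that Lemma~\ref{lem:Ikeda} applies: it guarantees at least two isometry classes of $(2d-1)$-dimensional spherical space forms with fundamental group $\Gamma$. This yields two irreducible fixed point free representations $\rho,\rho'$ whose images $\rho(\Gamma),\rho'(\Gamma)\subset\Ut(d)$ are \emph{not} conjugate in $\Ot(2d)$, i.e.\ $S^{2d-1}/\rho(\Gamma)$ and $S^{2d-1}/\rho'(\Gamma)$ are non-isometric by Theorem~\ref{thm:isometries}. Non-equivalence of the two CR manifolds is then immediate from Corollary~\ref{cor:isom=>CRequiv}: a CR equivalence would produce an isometry of the underlying space forms, contradicting this choice.

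It remains to establish CR isospectrality, that is $F_{\rho(\Gamma)}(z,w)=F_{\rho'(\Gamma)}(z,w)$, for which by Theorem~\ref{thm:almostconjugate} it suffices to prove that $\rho(\Gamma)$ and $\rho'(\Gamma)$ are almost conjugate in $\Ut(d)$. Theorem~\ref{thm:Gamma_d(m,n,r)-irreps} delivers almost conjugacy in $\Ot(2d)$, and the heart of the proof is to upgrade this to $\Ut(d)$. This upgrade is not formal: two elements of $\Ut(d)$ may share the multiset $\{\lambda_i,\bar\lambda_i\}$ of eigenvalues (so be $\Ot(2d)$-conjugate) while having different multisets $\{\lambda_i\}$ (so fail to be $\Ut(d)$-conjugate), for instance $\diag(\mi,\mi)$ and $\diag(\mi,-\mi)$; and by Theorem~\ref{thm:F_Gamma(z,w)} the function $F_\Gamma$ sees exactly $\det(z-\gamma)=\prod_i(z-\lambda_i)$, not its conjugate-symmetrization, so this distinction matters. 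I would therefore revisit the character computation underlying Theorem~\ref{thm:Gamma_d(m,n,r)-irreps}, using the explicit diagonal form of $\rho(A)$ and the monomial form of $\rho(B)$, and show that the conjugacy-class bijection $\tau:\rho(\Gamma)\to\rho'(\Gamma)$ can be chosen to match the full \emph{complex} spectra element by element, i.e.\ $\det(z-\gamma)=\det(z-\tau(\gamma))$ for all $\gamma$. With this holomorphic refinement in hand, Theorem~\ref{thm:almostconjugate} gives $F_{\rho(\Gamma)}=F_{\rho'(\Gamma)}$ and the proof concludes.

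I expect this last refinement to be the main obstacle. The point to verify is that Ikeda's isospectral-but-not-isometric pairs differ by a holomorphic ambiguity in the parameters governing $\rho(A)$ and $\rho(B)$, rather than by an essential complex conjugation; granting this, the bijection preserves complex characteristic polynomials and the almost conjugacy lifts from $\Ot(2d)$ to $\Ut(d)$. Making this dichotomy precise, and ruling out the conjugation case for the pairs supplied by Lemma~\ref{lem:Ikeda}, is where the genuine work lies.
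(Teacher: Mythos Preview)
Your plan is correct and follows essentially the same route as the paper: choose a Type~I group $\Gamma_d(m,n,r)$ with $n=d^2$, invoke Lemma~\ref{lem:Ikeda} to get non-isometric space forms (hence non-equivalent CR manifolds via Corollary~\ref{cor:isom=>CRequiv}), and then upgrade the $\Ot(2d)$ almost conjugacy of Theorem~\ref{thm:Gamma_d(m,n,r)-irreps} to $\Ut(d)$ in order to apply Theorem~\ref{thm:almostconjugate}. You have correctly isolated the only non-formal step, and your diagnosis of why $\Ot(2d)$-almost-conjugacy does not automatically lift to $\Ut(d)$ is exactly right.

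The paper handles that step not by redoing the character computation from scratch but by observing that the irreducible fixed point free representations in question are all of the form $\pi_{1,l}$ with $\gcd(l,n)=1$, and that Ikeda's original argument in \cite[Thm.~1]{Ikeda83} already produces the conjugacy-class bijection at the level of complex spectra, i.e.\ in $\Ut(d)$; so the lift is already contained in the cited source. Two minor points you gloss over: the existence of a Type~I group with $n=d^2$ for the given $d$ needs a citation (the paper uses \cite[Lem.~2.6]{Ikeda83}), and your remark that $\Gamma$ is non-abelian because $d>1$ forces $r\not\equiv 1$ is a clean justification of non-cyclicity that the paper leaves implicit.
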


\begin{proof}
We fix an integer $d$ as in the hypothesis. 
There is a Type I group of the form $\Gamma_d(m,n,r)$ with $n=d^2$ (see \cite[Lem.~2.6]{Ikeda83}). 
Lemma~\ref{lem:Ikeda} ensures that there are irreducible fixed point free representations $\rho,\rho'$ of $\Gamma$ such that the spherical space forms $S^{2d-1}/\rho(\Gamma_d(m,n,r))$ and $S^{2d-1}/\rho'(\Gamma_d(m,n,r))$ are non-isometric. 
On the one hand, Corollary~\ref{cor:isom=>CRequiv} forces that the elliptic CR manifolds $S^{2d-1}/\rho(\Gamma_d(m,n,r))$ and $S^{2d-1}/\rho'(\Gamma_d(m,n,r))$ are not equivalent.

On the other hand, Theorem~\ref{thm:Gamma_d(m,n,r)-irreps} implies that $\rho(\Gamma_d(m,n,r))$ and $\rho'(\Gamma_d(m,n,r))$ are almost conjugate in $\Ot(2d)$.
Note that this does not imply in general that $\rho(\Gamma_d(m,n,r))$ and $\rho'(\Gamma_d(m,n,r))$ are almost conjugate in $\Ut(d)$. 
However, the irreducible representations are of the form $\pi_{1,l}$ for some $l\in\Z$ with $\gcd(l,n)=1$ in the notation of \cite[Prop.~2.4]{Ikeda83} (see also \cite[Thm.~5.5.10]{Wolf-book}), and furthermore, the proof of \cite[Thm.~1]{Ikeda83} shows that $\Gamma=\pi_{1,l}(\Gamma)$ and $\Gamma'=\pi_{1,l'}(\Gamma)$ are in fact almost conjugate in $\Ut(d)$ (which does imply almost conjugacy in $\Ot(2n)$). 
We conclude that the elliptic CR manifolds $S^{2d-1}/\Gamma$ and $S^{2d-1}/\Gamma'$ are CR isospectral by Theorem~\ref{thm:almostconjugate}.  
\end{proof}

We next show some particular examples of CR isospectral elliptic CR manifolds with non-cyclic group provided by Theorem~\ref{thm:Gamma_d(m,n,r)-irreps}. 
Let $\Gamma= \Gamma_d(m,n,r)$ be a group of Type I. 
The fixed point free irreducible representations of $\Gamma_d(m,n,r)$ as above are equivalent to (see e.g.\ \cite[Thm.~5.5.6]{Wolf-book}) $\pi_{k,l}$ for some integers $k,l$ such that $\gcd(k,m)=1$, $\gcd(l,n)=1$, and $k\equiv 1\pmod d$, where
\begin{align}
\pi_{k,l}(A)&= 
\begin{pmatrix}
	\xi_m^{kr^0} \\
	& \xi_m^{kr^1} \\
	&&\ddots\\
	&&& \xi_m^{kr^{d-1}}
\end{pmatrix}
, 
&
\pi_{k,l}(B) &=
\begin{pmatrix}
0&1\\
\vdots &&\ddots \\
0&&&1\\
\xi_{n'}^{l}&0&\dots &0
\end{pmatrix}
.
\end{align}
As we mentioned in the proof of Theorem~\ref{thm:CRisosp-noncyclic}, we can assume that $k=1$ up to isometry of the corresponding spherical space forms, and furthermore, $\pi_{1,l}(\Gamma)$ and $\pi_{1,l'}(\Gamma)$ are almost conjugate in $\Ut(d)$ for all $l,l'\in\Z$ prime to $n$. 

Here the examples:
\begin{enumerate}
\item The group $\Gamma:=\Gamma_{5}(11,25,3)$ satisfies that $\pi_{1,1}(\Gamma)$ and $\pi_{1,2}(\Gamma)$ are almost conjugate in $\Ut(5)$ and $S^{9}/\pi_{1,1}(\Gamma)$ and $S^{9}/\pi_{1,2}(\Gamma)$ are non-isometric as Riemannian manifolds and non-equivalent as elliptic CR manifolds.

\item The group $\Gamma:=\Gamma_{7}(29,49,7)$ satisfies that $\pi_{1,1}(\Gamma)$, $\pi_{1,2}(\Gamma)$, and $\pi_{1,3}(\Gamma)$ are pairwise almost conjugate in $\Ut(7)$ and the corresponding $13$-dimensional elliptic CR manifolds are non-isometric as Riemannian manifolds and non-equivalent as elliptic CR manifolds. 

\item The group $\Gamma:=\Gamma_{13}(53,169,10)$ satisfies that $\pi_{1,l}(\Gamma)$ for $l=1,\dots,6$ are pairwise almost conjugate in $\Ut(13)$ and the corresponding $25$-dimensional elliptic CR manifolds are non-isometric as Riemannian manifolds and non-equivalent as elliptic CR manifolds. 
\end{enumerate}
Wolf~\cite{Wolf01} provided more examples of almost conjugate subgroups of $\Ut(n)$ of type II--VI.

We end the section with some open questions. 
The first asks whether the order of the fundamental group of an elliptic CR manifold is determined by the spectrum of the Kohn Laplacian. 
This is true in the Riemannian case; it follows immediately because the volume is a spectral invariant and $\vol(S^{2d-1}/\Gamma)= \vol(S^{2d-1})/|\Gamma|$;
see also \cite[Cor.~2.4]{Ikeda80_3-dimI} for another proof.

\begin{question}
Do CR isospectral spherical space forms have the same order of their fundamental groups? 
\end{question}

Fan, Kim, Plzak, Shors, Sottile and Zeytuncu~\cite[Cor.~2.3]{Yunus-cia} answered affirmatively this question for CR lens spaces.

To introduce the second question, we mention that the lens spaces $L(16; 1, 3, 5, 7, 9)$ and $L(16; 1, 3, 5, 7, 11)$ are diffeomorphic as differentiable manifolds (and isometric as Riemannian lens spaces) by Proposition~\ref{prop:isometria}, and furthermore, CR isospectral as CR lens spaces though they are not equivalent by Proposition~\ref{prop:CR-isometria}.
The next question asks whether this situation can occur for elliptic CR manifolds with non-cyclic fundamental groups. 

\begin{question}
Are there diffeomorphic elliptic manifolds with non-cyclic fundamental groups that are CR isospectral but not equivalent as CR manifolds?
\end{question}

An affirmative solution follows by showing irreducible representations $\pi_{k,l}$ and $\pi_{k',l'}$ of a Type I group $\Gamma_d(m,n,r)$ such that $\Gamma:=\pi_{k,l}(\Gamma_d(m,n,r))$ and $\Gamma':=\pi_{k',l'}(\Gamma_d(m,n,r))$ are conjugate in $\Ot(2d)$ but not in $\Ut(d)$. 
Indeed, the conjugacy in $\Ot(2d)$ between $\Gamma$ and $\Gamma'$ ensures that $S^{2n-1}/\Gamma$ and $S^{2n-1}/\Gamma'$ are isometric as spherical space forms by Theorem~\ref{thm:isometries} and therefore diffeomorphic, but the elliptic CR manifolds $S^{2n-1}/\Gamma$ and $S^{2n-1}/\Gamma'$ are not equivalent by Theorem~\ref{thm:CRequiv} for not being conjugate in $\Ut(d)$.

Ikeda~\cite[Thm.~3.9]{Ikeda80_3-dimII} proved that two isospectral $5$-dimensional spherical space forms with non-cyclic fundamental group are necessarily isometric.

\begin{question}
Are two CR isospectral $5$-dimensional elliptic CR manifolds with non-cyclic fundamental groups necessarily equivalent? 
\end{question}

Ikeda~\cite[Thm.~3.1]{Ikeda80_3-dimII} proved that, for $d$ an odd prime number, any pair of $(2d-1)$-dimensional isospectral spherical space forms $S^{2d-1}/\Gamma, S^{2d-1}/\Gamma'$ have isomorphic fundamental groups, that is, $\Gamma\simeq\Gamma'$. 
The next question asks if this extends to the CR context.

\begin{question}
Let $d$ be a prime number.
Are there non-isomorphic finite subgroups $\Gamma,\Gamma'$ of $\Ut(d)$ acting freely on $S^{2d-1}$ such that the CR manifolds $S^{2d-1}/\Gamma$ and $S^{2d-1}/\Gamma'$ are not CR equivalent and CR isospectral?
\end{question}

The main difficulty in responding affirmatively the above question is that CR-isospectral elliptic CR manifolds $S^{2d-1}/\Gamma, S^{2d-1}/\Gamma'$ with $\Gamma,\Gamma'\subset \Ut(n)$ do not satisfy necessarily that $F_{\Gamma}(z,w)=F_{\Gamma'}(z,w)$ in order to apply Theorem~\ref{thm:F_Gamma=>Riem-isosp} to obtain that the spherical space forms $S^{2d-1}/\Gamma$ and $S^{2d-1}/\Gamma'$ are isometric by Ikeda's result.

\section{Isospectrality of elliptic Riemannian manifolds endowed with Berger metrics}\label{sec:Berger}

In this section, as a byproduct of our previous results, we describe some isospectral examples with respect to the Laplace-Beltrami operator of elliptic Riemannian manifolds with certain non-round Riemannian metrics on the sphere. 

The group $\Ut(n)$ acts transitively on $S^{n-1}=\{(z_1,\dots,z_n)\in\C^n: |z_1|^2+\dots+|z_n|^2=1 \}$ by left multiplication. 
This gives the diffeomorphism $\Ut(n)/\Ut(n-1)\cong S^{2n-1}$ given by $a\Ut(n-1)\mapsto a\cdot e_{n}$. 
Although an analogous situation occurs by replacing $\Ut(n)$ by $\SU(n)$, we restrict our attention to $\Ut(n)$ for simplicity. 

A $\Ut(n)$-invariant metric $g$ on $S^{2n-1}$, which turns out to be Riemannian, is characterized by $g_{e_n}(X,Y)=g_{a\cdot e_n}\big((dL_a)_{e_n}(X), (dL_a)_{e_n}(Y)\big)$ for every $a\in \Ut(n)$ and $X,Y\in T_{e_n}S^{2n-1}$, where $L_a:S^{2n-1}\to S^{2n-1}$ is given by $L_a(p)=a\cdot p$ for $p\in S^{2n-1}$.

Besides the (homothetic) round metrics on $S^{2n-1}$, we have a two-parameter family of $\Ut(n)$-invariant metrics on $S^{2n-1}$ which are usually called \emph{Berger metrics}, that we next describe following \cite[\S3]{BLPfullspec}. 

We set $G=\Ut(n)$, $H=\{\left(\begin{smallmatrix} A&0\\ 0&1 \end{smallmatrix}\right) :  A\in\Ut(n-1)\}\simeq \Ut(n-1)$, and $K=\{\left(\begin{smallmatrix} A&0\\ 0&z \end{smallmatrix}\right) :  A\in\Ut(n-1), \, z\in\Ut(1) \}\simeq \Ut(n-1)\times \Ut(1)$. 
Note that $H\subset K\subset G$, $G/H\cong S^{2n-1}$, $G/K\cong P^n(\C)$, $K/H\cong \Ut(1)$. 
At the Lie algebra level we have that $\fh\subset \fk\subset\fg$ and we define subspaces $\fp,\fq$ of $\fg$ satisfying the orthogonal decomposition $\fg=\fk\oplus \fq $ and $\fk=\fh\oplus\fp$ with respect to the Killing form of $\fg$. 
The isotropy representation of $H$ is given by $\fp\oplus\fq$, with $\fp$ trivial, $\dim_\R\fp=1$, and $\fq$ equivalent to the standard representation, $\dim_\R\fq=2(n-1)$.

We denote by $\innerdots_0$ the inner product on $\fg$ given by $\inner{X}{Y}_0=-\frac12 \op{Re}(\tr(XY))$ for any $X,Y\in\fg=\ut(n)$. 
It turns out that $\innerdots_0$ is $\Ad(G)$-invariant. 
For $r,s>0$, we set 
\begin{equation}
\innerdots_{(r,s)}=\frac{1}{r^2} \innerdots_0|_{\fp} + \frac{1}{s^2}\innerdots_0|_{\fq}. 
\end{equation}
It turns out that the set of $G$-invariant metrics on $G/H=S^{2n-1}$ is $\{g_{(r,s)}:  r,s>0\}$.
The round metric with constant sectional curvature $1$ is $g_{(\frac{1}{\sqrt{2}},1)}$.
Of particular interest is $g_{(1,1)}$ for being the standard metric of $S^{2n-1}$ with respect to the realization $\Ut(n)/\Ut(n-1)$.

We next analyze the spectrum of the Laplace-Beltrami operator associated to the elliptic Riemannian manifold $(S^{2n-1},g_{(r,s)})$. 
Similarly as in \S\ref{subsec:Kohnelliptic}, the space of functions given by the restriction to $S^{2n-1}$ of polynomials in $\HH_{p,q}$ are eigenfunctions; in this case, the corresponding eigenvalue is (see \cite[Proof of Prop.~3.1]{BLPfullspec})
\begin{equation}
\lambda^{(p,q)}(r,s):=
\Big( 4\min(p,q)\big(\min(p,q)+|p-q|+n\big)+2|p-q|n \Big) s^2 + 2(p-q)^2r^2
.
\end{equation}

Let $M$ be a $(2n-1)$-dimensional elliptic manifold.
There is a finite subgroup $\Gamma$ of $\Ut(n)$ acting freely on $S^{2n-1}$ such that $M$ is diffeomorphic to $S^{2n-1}/\Gamma$. 
For $r,s>0$, we denote again by $g_{(r,s)}$ the metric on $S^{2n-1}/\Gamma$ induced by the finite cover $S^{2n-1} \to S^{2n-1}/\Gamma$, and we denote by $\Delta_{\Gamma}^{(r,s)}$ the Laplace-Beltrami operator associated to $(S^{2n-1}/\Gamma, g_{(r,s)})$. 
Similarly as in \S\ref{subsec:Kohnelliptic}, elements of $\HH_{p,q}^\Gamma$ induce eigenfunctions of $\Delta_{\Gamma}^{(r,s)}$ with eigenvalue $\lambda^{(p,q)}(r,s)$. 
Consequently, the multiplicity of a positive real number $\lambda$ in the spectrum of $\Delta_{\Gamma}^{(r,s)}$ is given by 
\begin{equation}
\sum_{{p,q\in\N_0 \,: \,  \lambda^{p,q}(r,s)=\lambda} } \dim\HH_{p,q}^\Gamma. 
\end{equation}

Here we state an immediate consequence of this description. 

\begin{theorem}
Let $\Gamma,\Gamma'$ be finite subgroups of $\Ut(n)$ acting freely on $S^{2n-1}$.
If $F_{\Gamma}(z,w)=F_{\Gamma'}(z,w)$, then the spectra of $\Delta_{\Gamma}^{(r,s)}$ and $\Delta_{\Gamma'}^{(r,s)}$ coincide, that is, the Riemannian manifolds $(S^{2n-1}/\Gamma, g_{(r,s)})$  and $(S^{2n-1}/\Gamma', g_{(r,s)})$ are isospectral, for any positive real numbers $r,s$. 
\end{theorem}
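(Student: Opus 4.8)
The plan is to observe that the hypothesis is exactly the coefficient-wise statement needed, and then to match multiplicities one eigenvalue at a time. Since the monomials $z^pw^q$ with $p,q\geq0$ form a basis of the ring of formal power series in $z,w$, the equality $F_{\Gamma}(z,w)=F_{\Gamma'}(z,w)$ is equivalent to $\dim\HH_{p,q}^{\Gamma}=\dim\HH_{p,q}^{\Gamma'}$ for all $p,q\geq0$, exactly as recorded just before Proposition~\ref{prop:F_Gamma=>CR-isosp}. This term-by-term equality of dimensions is the only input I will use.

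Next I would recall the multiplicity description established above: elements of $\HH_{p,q}^{\Gamma}$ induce eigenfunctions of $\Delta_{\Gamma}^{(r,s)}$ with eigenvalue $\lambda^{(p,q)}(r,s)$, and via the decomposition $L^2(S^{2n-1}/\Gamma)=\widehat{\bigoplus_{p,q\geq0}}\HH_{p,q}^{\Gamma}$ the multiplicity of a fixed real number $\lambda$ equals
\begin{equation*}
\sum_{p,q\in\N_0\,:\,\lambda^{(p,q)}(r,s)=\lambda}\dim\HH_{p,q}^{\Gamma}.
\end{equation*}
The crucial point is that the index set $\{(p,q)\in\N_0^2:\lambda^{(p,q)}(r,s)=\lambda\}$ depends only on $n,r,s$ and $\lambda$, and not on $\Gamma$, because the closed formula for $\lambda^{(p,q)}(r,s)$ involves only $p,q,n,r,s$. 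Hence, fixing $r,s>0$ and an arbitrary $\lambda$, I would sum the equalities $\dim\HH_{p,q}^{\Gamma}=\dim\HH_{p,q}^{\Gamma'}$ over this common index set, obtaining that the multiplicity of $\lambda$ in $\Spec(\Delta_{\Gamma}^{(r,s)})$ equals its multiplicity in $\Spec(\Delta_{\Gamma'}^{(r,s)})$. As $\lambda$ and the pair $(r,s)$ were arbitrary, the two spectra coincide for every choice of positive reals $r,s$, which is the assertion.

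There is no genuine obstacle here: the statement is an immediate consequence of the eigenspace decomposition together with the closed form for $\lambda^{(p,q)}(r,s)$, both already in place. The single point deserving a line of care is that the comparison of the two formal generating functions must be read coefficient-wise, which is precisely why $F_{\Gamma}(z,w)$ encodes the dimensions $\dim\HH_{p,q}^{\Gamma}$ rather than the Kohn spectrum itself (cf.\ Remark~\ref{rem:convese}); once this is unpacked, the $\Gamma$-independence of the eigenvalue labels $\lambda^{(p,q)}(r,s)$ does all the work. Structurally, this result is the Berger-metric analogue of Theorem~\ref{thm:F_Gamma=>Riem-isosp}, which handles the round (Laplace--Beltrami) case through the same mechanism applied to the generating function $F_{\Gamma}(z,z)$.
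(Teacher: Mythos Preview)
Your proof is correct and is exactly the argument the paper has in mind: the theorem is stated there as ``an immediate consequence of this description,'' with no separate proof, and your write-up simply unpacks that immediacy by reading $F_\Gamma=F_{\Gamma'}$ coefficient-wise and summing the equalities $\dim\HH_{p,q}^{\Gamma}=\dim\HH_{p,q}^{\Gamma'}$ over the $\Gamma$-independent index set $\{(p,q):\lambda^{(p,q)}(r,s)=\lambda\}$.
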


Consequently, all examples of finite subgroups $\Gamma,\Gamma'$ of $\Ut(n)$ acting freely on $S^{2n-1}$ satisfying that $F_{\Gamma}(z,w)=F_{\Gamma'}(z,w)$ from Sections~\ref{sec:ikeda}--\ref{sec:isospsphericalspaceforms} provide a curve of pairs of isospectral Riemannian manifolds. 
Although there are two parameters to move, $r$ and $s$, we obtain only a curve up to homotheties.

\bibliographystyle{plain}

\end{document}